\documentclass[11pt]{amsart}

\usepackage{amscd}

%
%
\usepackage{amsthm}



\newtheorem{thm}[equation]{Theorem}
\newtheorem{lem}[equation]{Lemma}
\newtheorem{cor}[equation]{Corollary}
\newtheorem{prop}[equation]{Proposition}

\newtheorem*{thm*}{Theorem}
\newtheorem*{prop*}{Proposition}
\newtheorem*{cor*}{Corollary}
\newtheorem*{lem*}{Lemma}
\newtheorem*{MT*}{Main Theorem}


\theoremstyle{definition} %
\newtheorem{defn}[equation]{Definition}
\newtheorem*{defn*}{Definition}

\newtheorem{eg}[equation]{Example}

\theoremstyle{remark} %

\newtheorem*{rmk*}{Remark}
\newtheorem*{rmks*}{Remarks}

%
%
\newtheoremstyle{exercise}
  {3pt}
  {3pt}
  {\small}
  {}
  {\sc\small}
  {.}
  {.5em}
   {}     
  {}

\theoremstyle{exercise}

%
  
%
%

%
%
%
\makeatletter
{\renewcommand{\theequation}{#1}}%
{\renewcommand{\theequation}{\arabic{equation}}\addtocounter{equation}{-1}\global\@ignoretrue}
\makeatother

\makeatletter
{\renewcommand{\theequation}{#1}\begin{eqnarray}}%
{\end{eqnarray}\renewcommand{\theequation}{\arabic{equation}}\addtocounter{equation}{-1}\global\@ignoretrue}
\makeatother

%
%
\makeatletter
{\smallskip \refstepcounter{equation}\noindent{\textbf{\theequation.} }{{\textbf{#1.}}}}%
{\smallskip \global\@ignoretrue}
\makeatother

%
%
\makeatletter
{\smallskip \refstepcounter{equation}\noindent{\textbf{\theequation.} }{{\textbf{#1}}}}%
{\smallskip \global\@ignoretrue}
\makeatother

%
%
\makeatletter
{\smallskip \refstepcounter{equation}{\sc \theequation}{\sc (#1).}}%
{\smallskip \global\@ignoretrue}
\makeatother

%
%
\makeatletter
{\smallskip \refstepcounter{equation}\noindent{\textbf\theequation.}{\textsl{ #1.}}}%
{\smallskip \global\@ignoretrue}
\makeatother

%
%
\makeatletter
\newenvironment{borel*}%
{\smallskip \refstepcounter{equation}\noindent{\textbf{\theequation.}}}%
{\global\@ignoretrue}
\makeatother

%
%
\newcommand{\flist}[1]{\hangindent\leftmargini\textup{(1)}\hskip\labelsep {#1}%
\begin{enumerate}%
\setcounter{enumi}{1}%
}

\newcommand{\ot}{\otimes}

\setlength{\unitlength}{.4in}

\newcommand{\eand}{\quad\text{and}\quad}

%
%
\newcommand{\Q}{{\mathbb{Q}}}        
\newcommand{\F}{{\mathbb{F}}}        
\newcommand{\Z}{{\mathbb{Z}}}        

\newcommand{\Zm}[1]{\Z/{#1}\Z}



\newcommand{\oddots}{{\mathinner{\mkern1mu\raise1pt\vbox{\kern7pt\hbox{.}}\mkern2mu\raise4pt\hbox{.}\mkern2mu\raise7pt\hbox{.}\mkern1mu}}}

%
%
%
%

%
%

%
%

%
%

\newcommand{\Fx}{{F^{\times}}}

%
%


\newcommand{\qform}[1]{{\left\langle{#1}\right\rangle}}                   
\newcommand{\pform}[1]{{\langle\!\langle{#1}\rangle\!\rangle}} 


%
%



%
%






\DeclareMathOperator{\res}{res}
\DeclareMathOperator{\cores}{cor}

%
%

%
%

%
%
\newcommand{\Nrd}{{\mathrm{Nrd}}}

%
%

\newcommand{\iso}{\xrightarrow{\sim}}

\newcommand{\ra}{\rightarrow}

%
%


%
%


\numberwithin{equation}{section}

\newcommand{\ba}{\bar{a}}
\newcommand{\bb}{\bar{b}}

\newcommand{\Lb}{\bar{L}}
\newcommand{\Eb}{\bar{E}}
\newcommand{\Fb}{{\bar{F}}}
\newcommand{\Bb}{\bar{B}}
\newcommand{\Ab}{\bar{A}}

\newcommand{\Fh}{{\hat{F}}}
\newcommand{\Lh}{{\hat{L}}}
\newcommand{\Ah}{\hat{A}}
\newcommand{\ah}{{\hat{\alpha}}}

\newcommand{\ram}{r}

\begin{document}

\title{Quaternion algebras with the same subfields}

\author{Skip Garibaldi}
\address{(Garibaldi) Department of Mathematics \& Computer Science, Emory University, Atlanta, GA 30322, USA}
\email{skip@member.ams.org}
\urladdr{http://www.mathcs.emory.edu/{\textasciitilde}skip/}

\author{David J. Saltman}
\address{(Saltman) Center for Communications Research-Princeton, 805 Bunn Drive, Princeton, NJ 008540, USA}
\email{saltman@idaccr.org}


\subjclass[2000]{Primary 16K20; secondary 11E04, 12G05}


\begin{abstract}
G.~Prasad and A.~Rapinchuk asked if two quaternion division $F$-algebras that have the same subfields are necessarily isomorphic.  The answer is known to be ``no" for some very large fields.  We prove that the answer is ``yes" if $F$ is an extension of a global field $K$ so that $F/K$ is unirational and has zero unramified Brauer group.  We also prove a similar result for Pfister forms and give an application to tractable fields.
\end{abstract}
\maketitle

\section{Introduction}

Gopal Prasad and Andrei Rapinchuk asked the following question in Remark 5.4 of their paper \cite{PrRap:weakly}:
\begin{equation}\label{ques}
\parbox{4.25in}{\emph{If two quaternion division algebras over a field $F$ have the same maximal subfields, are the algebras necessarily isomorphic?}}
\end{equation}
The answer is ``no" for some fields $F$, see \S\ref{Rost.eg} below.
The answer is ``yes" if $F$ is a global field by the Albert-Brauer-Hasse-Minkowski Theorem \cite[8.1.17]{NSW2}.    Prasad and Rapinchuk note that the answer is unknown even for fields like $\Q(x)$.  We prove that the answer is ``yes" for this field:

\begin{thm} \label{MT}
Let $F$ be a field of characteristic $\ne 2$ that is transparent. If $D_1$ and $D_2$ are quaternion division algebras over $F$ that have the same maximal subfields, then $D_1$ and $D_2$ are isomorphic.
\end{thm}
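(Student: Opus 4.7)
My strategy is to prove the stronger assertion $[D_1] = [D_2]$ in $\operatorname{Br}(F)$, which suffices because both Brauer classes are represented by quaternion division algebras of the same dimension. Equivalently, I want to show that the biquaternion algebra $B := D_1 \otimes_F D_2$ is split. Since $D_1$ and $D_2$ share all of their quadratic subfields, in particular they share at least one, so Albert's theorem on biquaternion algebras rules out $B$ being division; its Schur index divides $2$, hence $B$ is Brauer-equivalent to a quaternion algebra $Q$ over $F$. The theorem reduces to showing that $Q$ is split.

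Next I would extract structural information about $Q$ from the hypothesis. If $F(\sqrt{d})$ is any quadratic subfield of $D_1$, then it splits $D_1$, hence also splits $D_2$ by hypothesis, and therefore kills $[Q] = [D_1] + [D_2]$; so every quadratic subfield of $D_1$ embeds in $Q$. Having many subfields does not by itself force a quaternion algebra to split, and this is where I would invoke the transparency hypothesis on $F$. I expect ``transparent'' to encode a local-to-global principle in the spirit of Albert--Brauer--Hasse--Minkowski: an element of $\operatorname{Br}(F)$ is trivial provided its residues at a distinguished family of discrete valuations all vanish and some unramified obstruction is trivial. (This is consistent with the abstract's description of $F/K$ unirational with trivial unramified Brauer group.) Under such an interpretation, it suffices to show, for each divisorial valuation $v$, that the residue $\partial_v(Q) = \partial_v(D_1) + \partial_v(D_2) \in H^1(\kappa(v), \mathbb{Z}/2)$ vanishes, i.e., that $\partial_v(D_1) = \partial_v(D_2)$.

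The main obstacle, and what I expect to be the technical heart of the proof, is matching these residues at each $v$ using only the common-subfields data. Since $D_1$ and $D_2$ share a quadratic subfield, I would write $D_1 = (a,b_1)$ and $D_2 = (a,b_2)$, so that $Q = (a, b_1 b_2)$, and try to compute $\partial_v(D_1)$ and $\partial_v(D_2)$ in terms of $a$ and the $b_i$. At a place $v$ where $D_1$ is ramified, $\partial_v(D_1)$ determines a quadratic unramified extension of $F_v$ which splits $D_1 \otimes F_v$; the plan is to realize this residue globally by producing a $v$-unit $u \in F^\times$ whose reduction represents $\partial_v(D_1)$ and such that $F(\sqrt{u})$ actually embeds in $D_1$. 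By the common-subfields hypothesis this $F(\sqrt{u})$ then embeds in $D_2$ as well, forcing $\partial_v(D_2)$ to contain the reduction of $u$, and yielding $\partial_v(D_1) = \partial_v(D_2)$. The delicate point is precisely this global realization step, since not every local residue class is visible from a global quadratic splitting field; circumventing this should be exactly what the axioms defining ``transparent'' are engineered to enable, presumably through a weak-approximation or good-reduction argument at the distinguished divisorial valuations. Once all residues are shown to agree, transparency forces $Q$ to be split and hence $D_1 \cong D_2$.
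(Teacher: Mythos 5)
There is a genuine gap, and it lies in your reading of the hypothesis ``transparent.'' You treat transparency as a Hasse-type local--global principle: residues vanish at a distinguished family of valuations plus a trivial unramified obstruction $\Rightarrow$ the class is zero, so that matching $\partial_v(D_1)=\partial_v(D_2)$ everywhere would force $Q=D_1\otimes D_2$ to split. That is not what transparency says, and it cannot say it: transparent fields in general have $H^2_u(F)\neq 0$ (dyadic local fields and global fields with two or more dyadic places are transparent, yet carry nonzero classes that are unramified at every nondyadic valuation and killed by every real closure). Transparency only guarantees that such a nonzero class is \emph{detected} at some dyadic completion: $D\otimes\Fh$ is division and its residue algebra is a separable quadratic extension of $\Fb$. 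Consequently, even after you have matched residues at all nondyadic places and at the orderings, you are left with the possibility that $[Q]$ is a nonzero element of $H^2_u(F)$, and your argument has no mechanism to rule this out. This dyadic case is precisely the technical heart of the paper (Proposition \ref{char2}): assuming $Q$ is division over a dyadic completion with separable quadratic residue algebra, one lifts a quadratic subfield, writes $A_i=(L,a_i)$, $B=(L,b)$ with $v(b)=1$, and derives a parity contradiction, thereby producing a quadratic extension embedding in one $D_i$ and not the other. Nothing in your proposal corresponds to this step.

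A secondary remark on structure: the paper argues contrapositively throughout --- assuming $D_1\not\cong D_2$, it exhibits a quadratic extension lying in one algebra and not the other, treating separately the cases where $Q$ is not split by some real closure, where $Q$ ramifies at a nondyadic valuation (Proposition \ref{crux.deg2}), and the residual $H^2_u$ case just described. Your ``delicate global realization step'' at nondyadic $v$ is therefore not needed in the paper's form of the argument: when the ramification of $D_1$ and $D_2$ at $v$ differs, one simply names the distinguishing field (e.g.\ $F(\sqrt{a_2})$ with $a_2$ a $v$-unit), with no weak approximation required; and when only one of them ramifies, a Krasner-type completion argument (Lemmas \ref{krasner}, \ref{krasner.2}) does the job. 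So even the portion of your plan that is on the right track is left as an unproved sketch, while the dyadic analysis that the transparency hypothesis is actually designed to feed into is missing entirely.
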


The term ``transparent" is defined in \S\ref{transparent.sec} below.  Every retract rational extension of a local, global, real-closed, or algebraically closed field is transparent; in particular $K(x_1, \ldots, x_n)$ is transparent for every global field $K$ of characteristic $\ne 2$ and every $n$.  There are many other examples.

The theorem can be viewed as a statement about symbols in the Galois cohomology group $H^2(F, \Zm2)$.  We also state and prove an analogue for symbols in Galois cohomology of $H^d(F, \Zm2)$---i.e., $d$-Pfister quadratic forms---in Theorem \ref{pf.thm} below.


\subsection*{Notation}
A \emph{global field} is a finite extension of $\Q$ or of $\F_p(t)$ for some prime $p$.   A \emph{local field} is a completion of a global field with respect to a discrete valuation.

\section{An example} \label{Rost.eg}

\begin{eg}
Several people (in no particular order: Markus Rost, Kelly McKinnie, Adrian Wadsworth,  Murray Schacher, Daniel Goldstein...) noted that some hypothesis on the field $F$ is necessary for the conclusion of the theorem to hold.  Here is an example to illustrate this.  

Let $F_0$ be a field of characteristic $\ne 2$.  We follow \cite{Lam} for notation on quadratic forms, except that we define the symbol $\pform{a_1, \ldots, a_d}$ to be the $d$-Pfister form $\otimes_{i = 1}^d \qform{1, -a_i}$.  For a Pfister form $\phi$, we put $\phi'$ for the unique form such that $\phi \cong \qform{1} \oplus \phi'$.

Suppose that $F_0$ has quaternion division algebras $Q_1$, $Q_2$ that are not isomorphic.  Write $\phi_i := \pform{a_i, b}$ for the norm form of $Q_i$.  The Albert form $\phi_1 - \phi_2$ has anisotropic part similar to $\gamma = \pform{a_1 a_2, b}$.

Fix an extension $E/F_0$ and a proper quadratic extension $E(\sqrt{c})$ contained in $Q_i$ but not $Q_{i+1}$, with subscripts taken modulo 2.  Equivalently, $-c$ is represented by $\phi'_i$ but not by $\phi'_{i+1}$.  Put $q_{i,c} := \qform{c} \oplus \phi'_{i+1}$; it is anisotropic.  Further, its determinant $c$ is a nonsquare, so $q_{i,c}$ is not similar to $\phi_i$, $\phi_{i+1}$, nor $\gamma$.  All three of those forms remain anisotropic over the function field $E(q_{i,c})$ of $q_{i,c}$ by \cite[X.4.10(3)]{Lam}.  That is, $Q_1 \ot E(q_{i,c})$ and $Q_2 \ot E(q_{i,c})$ are division and contain $E(q_{i,c})(\sqrt{c})$, and the two algebras are still distinct.

We build a field $F$ from $F_0$ inductively.  Suppose we have constructed the field $F_j$ for some $j \ge 0$.  Let $F_{j+1}$ be the composita of the extensions $F_j(q_{i,c})$ for $c$ as in the previous paragraph with $E = F_j$ and $i = 1, 2$.  Let $F$ be the colimit of the $F_j$.

By construction, $Q_1 \ot F$ and $Q_2 \ot F$ are both division, are not isomorphic to each other, and have the same quadratic subfields.  This shows that some hypothesis on the field $F$  is necessary for the conclusion of the theorem to hold.
\end{eg}

One cannot simply omit the hypothesis ``division" in the theorem, by Example \ref{local.bad}.

Furthermore, Theorem \ref{MT} does not generalize to division algebras of degree $> 2$, even over number fields.  One can easily construct examples using the local-global principle for division algebras over a number field, see \cite[Example 6.5]{PrRap:weakly} for details. 

\section{Discrete valuations: good residue characteristic} \label{good.char}

Fix a field $F$ with a discrete valuation $v$.  We write $\Fb$ for the residue field and $\Fh$ for the completion.  (Throughout this note, for fields with a discrete valuation and elements of such a field, we use bars and hats to indicate the corresponding residue field/completion and residues/image over the completion.)

For a central simple $F$-algebra $A$, we write $\Ah$ for $A \ot \Fh$.  It is Brauer-equivalent to a central division algebra $B$ over $\Fh$.  As $v$ is complete on $\Fh$, it extends uniquely to a valuation on $B$ by setting $v(b) := \frac1{\deg B}\,v(\Nrd_B(b))$ for $b \in B^\times$, see, e.g., \cite[12.6]{Reiner}.  We say that $A$ is \emph{unramified} if $v(B^\times) = v(F^\times)$---e.g., if $\Fh$ splits $A$---and \emph{ramified} if $v(B^\times)$ is strictly larger.  That is, $A$ is ramified or unramified if $v$ is so on $B$.  Note that these meanings for ramified and unramified are quite a bit weaker than the the usual definitions (which usually require that the residue division algebra of $B$ be separable over $\Fb$).

The valuation ring in $\Fh$ is contained in a noncommutative discrete valuation ring $S$ in $B$, and the residue algebra of $S$ is a (possibly commutative) division algebra $\Bb$ whose center contains $\Fb$.  One has the formula
\[
\dim_\Fb \Bb \cdot [v(B^\times) : v(F^\times)] = \dim_\Fh B
\]
(see \cite[p.~393]{W:survey} for references).
That is, $A$ is unramified if and only if $\dim_{\Fb} \Bb = \dim_\Fh B$.

We use a few warm-up lemmas.

\begin{lem} \label{krasner}
Let $A_1, A_2$ be central simple algebras over a field $F$ that has a discrete valuation.  If there is a separable maximal commutative subalgebra that embeds in one of the $\Ah_i$ and not the other, then the same is true for the $A_i$.
\end{lem}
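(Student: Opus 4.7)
My plan is to approximate a primitive generator of the given $\Fh$-subalgebra by an element of $A_1$ and then use Krasner's lemma to descend the embedding to $F$.

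Set $n := \deg A_1$. After relabelling I may assume the given separable maximal commutative $\Fh$-subalgebra $\Lh \subseteq \Ah_1$ is the one that does not embed in $\Ah_2$. Since $\Fh$ is infinite and $\Lh$ is étale of dimension $n$, the primitive element theorem yields $\alpha \in \Ah_1$ with $\Lh = \Fh[\alpha]$; the reduced characteristic polynomial $f \in \Fh[x]$ of $\alpha$ is then monic separable of degree $n$ and coincides with the minimal polynomial of $\alpha$ over $\Fh$.

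Next I would use density of $A_1$ in $\Ah_1$ to pick $\alpha' \in A_1$ very close to $\alpha$, and let $g \in F[x]$ denote the reduced characteristic polynomial of $\alpha'$. Because the reduced characteristic polynomial is a polynomial function of the coordinates, $g$ is close to $f$ in $\Fh[x]$; if the approximation is close enough, $g$ is still separable and Krasner's lemma produces an isomorphism of étale $\Fh$-algebras $\Fh[x]/(g) \cong \Fh[x]/(f) \cong \Lh$. Setting $L := F[\alpha'] \subseteq A_1$, the separability of $g$ forces $\dim_F L = n$, so $L$ is a separable maximal commutative $F$-subalgebra of $A_1$, and its base change $L \otimes_F \Fh$ is isomorphic to $\Lh$. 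If $L$ also embedded into $A_2$, then tensoring by $\Fh$ would embed $\Lh$ into $\Ah_2$, contradicting the hypothesis; so $L$ gives the desired subalgebra embedding in $A_1$ but not $A_2$.

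The step I expect to be the main obstacle is the Krasner/Hensel approximation: one must check that for $g$ sufficiently close to $f$ over $\Fh$, the irreducible factorizations $f = \prod f_i$ and $g = \prod g_i$ over $\Fh$ pair up so that $\Fh[x]/(f_i) \cong \Fh[x]/(g_i)$ for each $i$. This is the standard continuity-of-factorization consequence of Krasner's lemma, but quantifying how close $g$ must be to $f$ in terms of the pairwise root separation of $f$ is the delicate technical point underlying the whole argument.
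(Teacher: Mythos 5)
Your proposal is correct and follows essentially the same route as the paper: approximate a primitive generator of $\Lh$ by an element of $A_1$ using density, note that the reduced characteristic polynomial varies continuously, and invoke Krasner's lemma (continuity of factorization, as in Reiner's treatment) to get $F[\alpha'] \otimes_F \Fh \cong \Lh$, so the descended subalgebra embeds in $A_1$ but cannot embed in $A_2$. Your extra care about the étale/primitive-element and minimal-versus-reduced-characteristic-polynomial points just makes explicit what the paper leaves implicit.
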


\begin{proof}
Let $\Lh$ be a separable maximal commutative subalgebra of $\Ah_1$ that is not a maximal subalgebra of $\Ah_2$.  Choose $\ah \in \Lh$ so that $\Lh = \Fh(\ah)$.  Since $A_1$ is dense in $\Ah_1$, we can choose $\alpha \in A_1$ as close as we want to $\ah$.  Thus we can choose the Cayley-Hamilton polynomial of $\alpha$ to be as close as we need to that of $\ah$.  In particular, by Krasner's Lemma (applied as in \cite[33.8]{Reiner}), we can assume that $\Fh(\alpha) \cong \Fh(\ah)$.  In particular, $\Fh(\alpha)$ is not a maximal subalgebra of $\Ah_2$, which implies that $F(\alpha)$ is not a maximal subalgebra of $A_2$ and is by construction a maximal subfield of $A_1$.  Since $\Fh(\alpha) \cong \Fh(\ah)$, the minimal polynomial of $\alpha$ over $F$ has nonzero discriminant and hence is separable.
\end{proof}

\begin{lem} \label{krasner.2}
Let $A_1, A_2$ be central division algebras of prime degree over a field $F$ with a discrete valuation.  If
\begin{enumerate}
\item $\Ah_1$ is split and $\Ah_2$ is not; or
\item both $\Ah_1$ and $\Ah_2$ are division, but only one is ramified,
\end{enumerate}
then there is a separable maximal subfield of $A_1$ or $A_2$ that does not embed in the other.
\end{lem}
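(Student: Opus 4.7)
The plan is to invoke Lemma~\ref{krasner} in both cases by exhibiting, in the completion $\Ah_i$ of one of the two algebras, a separable maximal commutative subalgebra that does not embed in the other completion. Because $A_1$ and $A_2$ are division algebras, any finite-dimensional commutative $F$-subalgebra of either is automatically a field (it is a finite-dimensional commutative domain over $F$); hence the subalgebra Lemma~\ref{krasner} produces inside $A_1$ or $A_2$ will in fact be a separable maximal subfield, as required.

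In case~(1), since $\deg A_1 = p$ and $\Ah_1$ is split, we have $\Ah_1 \cong M_p(\Fh)$, and the diagonal copy of $\Fh^p$ is a separable maximal commutative subalgebra. As $\Ah_2$ is central simple of prime degree and not split, it is a division algebra and so has no nontrivial idempotents; consequently $\Fh^p$ does not embed in $\Ah_2$.

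In case~(2), I would relabel so that $\Ah_2$ is ramified and $\Ah_1$ is unramified. The ramification index of $\Ah_2$ is a nontrivial divisor of $(\deg A_2)^2 = p^2$ and hence a multiple of $p$, so there is an element $\varpi \in \Ah_2^\times$ with $v(\varpi) = v(\pi_F)/p$, where $\pi_F$ is a uniformizer of $\Fh$. The subfield $\Lh := \Fh(\varpi)$ of the division algebra $\Ah_2$ has valuation group containing $\tfrac{1}{p}\,v(\Fh^\times)$, and $[\Lh:\Fh] \le \deg \Ah_2 = p$, so $[\Lh:\Fh] = p$ and $\Lh$ is a totally ramified maximal subfield. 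In the good residue characteristic setting of this section, a totally ramified degree-$p$ extension is tame and therefore separable. If $\Lh$ embedded in $\Ah_1$, the unique extension of $v$ to $\Ah_1$ would restrict to $\Lh$ with $v(\Lh^\times) \subseteq v(\Ah_1^\times) = v(\Fh^\times)$ by the unramifiedness of $\Ah_1$, contradicting $v(\varpi) \notin v(\Fh^\times)$.

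The principal technical point, and the place where the good residue characteristic assumption enters, is verifying separability of the totally ramified subfield $\Lh/\Fh$ in case~(2); in bad residue characteristic this construction could give a purely inseparable extension and would have to be replaced by a subtler one.
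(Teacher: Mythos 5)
Your overall route is the same as the paper's (which is just as terse): feed Lemma \ref{krasner} with the diagonal subalgebra of $\Ah_1\cong M_p(\Fh)$ in case (1), and with a ramified maximal subfield of the ramified completed algebra in case (2). Case (1) is correct as you wrote it, and your reduction from ``separable maximal commutative subalgebra'' to ``separable maximal subfield'' using that $A_1,A_2$ are division is fine.

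The problem is the scope of your case (2). Lemma \ref{krasner.2} is stated \emph{before} the standing hypothesis of \S\ref{good.char} that the residue characteristic is prime to $p$, and it is later invoked in exactly the opposite situation: in the proof of Proposition \ref{char2} it is applied to quaternion algebras over a field with a \emph{dyadic} valuation, i.e.\ with residue characteristic $2=p$. So an argument that appeals to ``the good residue characteristic setting of this section'' to guarantee separability of the totally ramified subfield $\Lh=\Fh(\varpi)$ does not prove the lemma as stated or as it is used. The repair is mostly painless: whenever $\Fh$ has characteristic $0$ --- which covers the dyadic application, since there $F$ has characteristic $0$ --- every finite extension of $\Fh$ is separable and your construction works verbatim; separability is genuinely at issue only in equal characteristic $p$ with wild ramification, where $\Fh(\varpi)$ can indeed be purely inseparable and one must instead argue, as the paper does, that a ramified division algebra over $\Fh$ contains a \emph{separable} maximal subfield that is ramified. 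You should either prove that existence statement in the wild equal-characteristic case or at least observe that your construction already covers all completions of characteristic $0$, not merely the tame ones. (A minor slip: by the formula $v(b)=\frac{1}{\deg B}\,v(\Nrd_B(b))$ the value group of $\Ah_2$ lies in $\frac{1}{p}\Z$, so the ramification index divides $p$ rather than merely $p^2$; this does not affect your conclusion that some element has value $v(\pi_F)/p$.)
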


\begin{proof}
The statement is obvious from Lemma \ref{krasner}.  In case (1), we take take the diagonal subalgebra in $\Ah_1$.  In case (2), we take a maximal separable subfield that is ramified.
\end{proof}
\smallskip

\emph{For the rest of this section,
we fix a prime $p$ and let $F$ be a field with a primitive $p$-th root of unity $\zeta$; we suppose that $F$ has a discrete valuation $v$ with residue field of characteristic different from $p$.}

A \emph{degree $p$ symbol algebra} is an (associative) $F$-algebra generated by elements $\alpha, \beta$ subject to the relations $\alpha^p = a$, $\beta^p = b$, and $\alpha \beta = \zeta \beta \alpha$.  It is central simple over $F$ \cite[p.~78]{Draxl} and we denote it by $(a, b)_F$ or simply $(a, b)$.

\begin{lem} \label{normalize.cyclic}
If $A$ is a degree $p$ symbol algebra over $F$, then $A$ is isomorphic to $(a, b)_F$ for some $a, b \in \Fx$ with $v(a) = 0$.
\end{lem}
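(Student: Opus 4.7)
My plan is to modify the symbol presentation of $A$ by changes of generators until the valuation of the first slot becomes zero.

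Write $A \cong (a,b)_F$ with standard generators $\alpha,\beta$ satisfying $\alpha^p=a$, $\beta^p=b$, $\alpha\beta=\zeta\beta\alpha$, and set $m=v(a)$, $n=v(b)$. The key preliminary observation is that it suffices to arrange $p\mid v(a)$: if $v(a)=pk$ and $\pi$ is a uniformizer, then $(\alpha\pi^{-k},\beta)$ is again a symbol-algebra pair of generators for $A$ (the relation $\alpha\pi^{-k}\cdot\beta = \zeta\beta\cdot\alpha\pi^{-k}$ is immediate since $\pi$ is central), and $(\alpha\pi^{-k})^p = a\pi^{-pk}$ is a unit.

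If $p\mid m$, nothing needs to be done. If $p\nmid m$ but $p\mid n$, I would swap the two slots using the identity $(a,b)_F \cong (b,a^{-1})_F$, which I can realize inside $A$ by the pair $\gamma=\beta$, $\delta=\alpha^{-1}$: multiplying $\alpha\beta=\zeta\beta\alpha$ first on the left by $\alpha^{-1}$ and then on the right by $\alpha^{-1}$ gives $\beta\alpha^{-1}=\zeta\alpha^{-1}\beta$, i.e.\ $\gamma\delta=\zeta\delta\gamma$. The new first slot then has valuation $n\equiv 0\pmod p$ and the previous reduction applies.

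The substantive case is $p\nmid m$ and $p\nmid n$. Since $n$ is invertible modulo $p$, I can pick $s\in\Z$ with $m+sn\equiv 0\pmod p$ and change generators to $\alpha'=\alpha\beta^s$, $\beta'=\beta$. A short induction using $\beta\alpha=\zeta^{-1}\alpha\beta$ gives
\[
(\alpha\beta^s)^p \;=\; \zeta^{-s\binom{p}{2}}\,ab^s,
\]
which equals $ab^s$ when $p$ is odd and $(-1)^s ab^s$ when $p=2$, since $\zeta^p=1$; a matching computation checks $\alpha'\beta'=\zeta\beta'\alpha'$. This exhibits $A$ as a symbol algebra whose first slot has valuation $m+sn\equiv 0\pmod p$, and the preliminary reduction finishes the job. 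The only delicate point is tracking the accumulated $\zeta$-factors in the change of generators, but these collapse as soon as $\zeta^p=1$ is applied, so no real obstacle arises.
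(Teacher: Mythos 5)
Your proof is correct and follows essentially the same route as the paper: both arrange that the valuation of the first slot is divisible by $p$ by multiplying it by a suitable power of (up to sign) $b$ --- your change of generators $(\alpha,\beta)\mapsto(\alpha\beta^{s},\beta)$ with the computation $(\alpha\beta^{s})^{p}=\zeta^{-s\binom{p}{2}}ab^{s}$ is exactly a hands-on verification of the symbol identity the paper obtains by citing that $(-b,b)$ is split --- and then absorb $p$-th powers of the uniformizer. The only difference is presentational: you work with explicit generators inside $A$ and spell out the slot-swap case and the final division by $\pi^{k}$, steps the paper covers by the citation to Draxl and leaves implicit.
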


\begin{proof}
Write $A$ as $(a,b)$ where $a = x \pi^{n_a}$ and $b = y \pi^{n_b}$ where $x, y$ have value 0 and $\pi$ is a uniformizer for $F$.  We may assume that $p$ divides neither $n_a$ nor $n_b$, so $n_a = sn_b$ modulo $p$.  But $A$ is isomorphic to $(a(-b)^{-s}, b)$ because $(-b, b)$ is split \cite[p.~82, Cor.~5]{Draxl}, and $v(a(-b)^{-s})$ is divisible by $p$.
\end{proof}

\begin{borel*}
Let $(a, b)$ be a symbol algebra as in Lemma \ref{normalize.cyclic} and write $\Fh$ for the completion of $F$ at $v$.  If $(a, b) \ot \Fh$ is not division, certainly $(a, b)$ is \emph{unramified}.  Otherwise, there are two cases:
\begin{itemize}
\item If $v(b)$ is divisible by $p$, then we can assume $v(b) = 0$, $(a, b)$ is unramified, and $\overline{(a,b)}$ is the symbol algebra $(\ba, \bb)_{\Fb}$.  (The residue algebra---which is division---contains $(\ba,\bb)_{\Fb}$, hence they must be equal.)
\item If $v(b)$ is not divisible by $p$, then $(a,b)$ is ramified and $\overline{(a,b)}$ is the field extension $\Fb(\sqrt[p]{\ba})$.  (By hypothesis, $a$ is not a $p$-th power in $\Fh$, so by Hensel's Lemma $\ba$ is not a $p$-th power in $\Fb$.)
\end{itemize}
\end{borel*}

\begin{prop} \label{crux.deg2}
Let $A_1$ and $A_2$ be degree $p$ symbol algebras over $F$ that are division algebras.  If
\begin{enumerate}
\item $A_1$ ramifies and $A_2$ does not; or
\item $A_1$ and $A_2$ both ramify but their ramification defines different extensions of the residue field $\Fb$,
\end{enumerate}
then there is a maximal subfield $L$ of one of the $A_i$ that does not split the other.
\end{prop}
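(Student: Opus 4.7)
The plan is to pass to the completion $\Fh$ and use the Borel preceding the proposition. Case (1) will reduce at once to Lemma \ref{krasner.2}. For case (2) I will construct an explicit maximal subfield of $A_1$ and use Hensel's Lemma together with a residue-field degree count to show that it does not split $A_2$.

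For case (1), $\Ah_1$ is division and ramified (since $A_1$ is), while $\Ah_2$ is unramified. Either $\Ah_2$ is not division, in which case I would apply Lemma \ref{krasner.2}(1) with the roles of $A_1$ and $A_2$ reversed, or $\Ah_2$ is division and unramified, in which case Lemma \ref{krasner.2}(2) applies directly. Either subcase produces the required separable maximal subfield.

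For case (2) I would normalize by Lemma \ref{normalize.cyclic} so that $A_i = (a_i, b_i)$ with $v(a_i) = 0$. Since both $\Ah_i$ are division and ramified, the Borel gives that $v(b_i)$ is not divisible by $p$ and identifies the residue algebra of $\Ah_i$ with the degree-$p$ field $\Fb(\sqrt[p]{\ba_i})$; the case (2) hypothesis says that these two residue extensions of $\Fb$ are distinct. I would then take $L := F(\sqrt[p]{a_1}) = F(\alpha_1) \subseteq A_1$. Since $\ba_1 \notin (\Fb^\times)^p$, Hensel's Lemma gives $a_1 \notin (\Fh^\times)^p$, so $[L:F] = p$ and $L$ is a maximal subfield of $A_1$; its completion $\Lh = \Fh(\sqrt[p]{a_1})$ is then the unramified degree-$p$ extension of $\Fh$ with residue field $\Lb = \Fb(\sqrt[p]{\ba_1})$.

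To finish, I would show that $\Lh$ does not split $\Ah_2 = (a_2, b_2)_{\Fh}$, which suffices since $A_2 \ot_F L$ being split would imply $\Ah_2 \ot_{\Fh} \Lh = A_2 \ot_F \Lh$ split. The key observation is that $\ba_2 \notin (\Lb^\times)^p$: otherwise $\Fb(\sqrt[p]{\ba_2})$ would embed in $\Lb = \Fb(\sqrt[p]{\ba_1})$, and a degree count would force the two residue extensions to coincide, contradicting (2). By Hensel $a_2 \notin (\Lh^\times)^p$, so $\Lh(\sqrt[p]{a_2})/\Lh$ is an unramified degree-$p$ extension, hence every element in the image of its norm map has $v_L$-valuation in $p\Z$; since $v_L(b_2) = v(b_2)$ is not divisible by $p$, the element $b_2$ is not such a norm, so $(a_2, b_2)_{\Lh}$ is nonzero in $\mathrm{Br}(\Lh)$. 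The step requiring the most care is precisely that key observation $\ba_2 \notin (\Lb^\times)^p$, which is just hypothesis (2) rephrased at the residue-field level; everything else is a routine Hensel-plus-norm computation, so I do not anticipate any serious obstacle.
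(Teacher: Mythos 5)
Your proposal is correct and follows essentially the same route as the paper: normalize via Lemma \ref{normalize.cyclic}, reduce case (1) to Lemma \ref{krasner.2}, and in case (2) take the maximal subfield $F(\sqrt[p]{a_i})$ coming from the unramified slot and observe that it cannot split the other algebra because the other algebra's ramification (residue extension) survives. The only difference is cosmetic: you swap the roles of the indices and spell out ``does not split the ramification'' via an explicit Hensel-plus-norm valuation computation, where the paper states it directly.
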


\begin{proof}
Write $A_i$ as $(a_i, b_i)$.
We can assume $v(a_1) = v(a_2) = 0$.
If both the $A_i$ ramify, then the $v(b_i)$ must be prime to $p$ and 
the field extensions of the ramification of the $A_i$ must be 
$\Fb(\sqrt[p]{\bar a_i})$. If these are different fields,
then the field $L_2 := F(\sqrt[p]{a_2})$ cannot split $A_1$ because it does
not split the ramification of $A_1$.

Thus assume $A_2$ is unramified and $A_1$ ramifies.  In particular, $A_1 \ot \Fh$ is division, and we are done by Lemma \ref{krasner.2}.
\end{proof}

\begin{eg} \label{local.bad}
In the proposition, the hypothesis that $A_1$ and $A_2$ are both division is necessary.  Indeed, for $\ell \ne 2$, the field $\Q_\ell$ of $\ell$-adic numbers has two quaternion algebras: one split and one division.  Only one is ramified, but both are split by every quadratic extension of $\Q_\ell$. 
\end{eg}

\section{Discrete valuations: bad residue characteristic} \label{bad.char}

In this section, we consider quaternion algebras over a field with a dyadic discrete valuation.  
The purpose of this section is to prove:
 
\begin{prop}\label{char2}
Let $A_1, A_2$ be quaternion division algebras over a field $F$ of characteristic $0$ that has a dyadic discrete valuation.  If $A_1 \ot A_2$ is Brauer-equivalent to a ramified quaternion division algebra with residue algebra a separable quadratic extension of $\Fb$, then there is a quadratic extension of $F$ that embeds in $A_1$ or $A_2$ but not both.
\end{prop}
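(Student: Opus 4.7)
The plan is to pass to the completion via Lemma~\ref{krasner}, which reduces the problem to producing a separable quadratic extension of $\Fh$ that embeds in exactly one of $\Ah_1, \Ah_2$. I then perform a case analysis on the ramification of the $\Ah_i$. Since unramified Brauer classes are closed under tensor product (they are represented by Azumaya algebras over the valuation ring) and $Q \ot \Fh$ is ramified division, at least one of $\Ah_1, \Ah_2$ must be ramified; after relabeling, assume $\Ah_1$ is. If $\Ah_2$ is split or is unramified division, Lemma~\ref{krasner.2} immediately produces the required quadratic extension. So I am reduced to the case where both $\Ah_1, \Ah_2$ are ramified division quaternion algebras, each with residue a quadratic field extension $\Ab_i$ of $\Fb$ (possibly purely inseparable, since $\mathrm{char}\,\Fb = 2$).

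In this remaining case, the candidate splitting field is the unique unramified separable quadratic extension $L$ of $\Fh$ lifting $\Lb$. Since $L$ splits $Q \ot \Fh$, the assumption that $\Ah_1, \Ah_2$ have the same separable quadratic subfields forces $L$ to embed in both or in neither. In the ``both'' sub-case I write each $\Ah_i$ as a cyclic algebra $(L/\Fh, \sigma, a_i)$, so that $Q \ot \Fh \sim (L/\Fh, \sigma, a_1 a_2)$; the ramification of $Q$ then forces $v(a_1) + v(a_2)$ to be odd, and adjusting the $a_i$ of even valuation by a norm from $L$ makes the corresponding $\Ah_i$ unramified, contradicting the reduction. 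In the ``neither'' sub-case, when both $\Ab_i$ are separable quadratic the unramified lift of $\Ab_1$ is a separable quadratic extension of $\Fh$ that splits $\Ah_1$ but not $\Ah_2$ (whenever $\Ab_1 \neq \Ab_2$; if $\Ab_1 = \Ab_2$ the tensor $\Ah_1 \ot \Ah_2$ turns out to be unramified, again a contradiction).

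The hardest part will be the subcase in which one or both residue extensions $\Ab_i$ is purely inseparable. There the clean cyclic-algebra description fails and the ramification of $\Ah_i$ is of the ``Kummer''/Artin--Schreier hybrid form native to residue characteristic $2$. The hypothesis that $Q$ has separable residue is essential here: I will use it to constrain the interaction between the inseparable residues $\Ab_i$ and the separable $\Lb$, either ruling out the inseparable configuration altogether or producing a distinguishing quadratic subfield (often of the form $\Fh(\sqrt{a})$) that embeds in only one of the $\Ah_i$.
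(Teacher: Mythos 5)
Your skeleton for the separable-residue branches is fine (the ``both embed $L$'' parity argument and the ``neither'' analysis when the $\Ab_i$ are separable are essentially correct), but the proposal has two genuine gaps. First, and decisively: the subcase you yourself flag as ``the hardest part''---one or both residue extensions $\Ab_i$ purely inseparable---is not proved at all; you only state a hope that the separability of $\Bb$ will ``constrain the interaction'' or ``rule out the configuration.'' Since in residue characteristic $2$ a ramified quaternion division algebra over $\Fh$ can perfectly well have purely inseparable residue, this case is not vacuous, and without an argument the proof is incomplete. Second, your reduction to ``both $\Ah_i$ ramified'' rests on the claim that unramified classes are closed under tensor product ``because they are represented by Azumaya algebras over the valuation ring.'' That justification only covers the strong (inertial) notion of unramified; in this paper ``unramified'' means only that the value group does not grow, and in residue characteristic $2$ a weakly unramified quaternion division algebra may have purely inseparable residue of dimension $4$ and is then \emph{not} Azumaya over the valuation ring. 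So the closure claim, as justified, does not apply; ruling out ``both weakly unramified with ramified product'' in this setting itself requires an argument (for instance via a common quadratic subfield whose residue must equal $\Bb$), i.e., exactly the kind of analysis you deferred.

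For comparison, the paper's proof sidesteps the entire case division on the separability of the $\Ab_i$. After reducing (via Lemmas \ref{krasner} and \ref{krasner.2}, arguing by contradiction) to $F$ complete with both $A_i$ division and of the same ramification index, it chooses a quadratic subfield $L \subset A_1$ whose residue $\Lb$ is quadratic over $\Fb$ (possible since $\Ab_1 \ne \Fb$, whether $\Ab_1$ is separable, inseparable, or a quaternion algebra). The same-subfields assumption puts $L$ inside $A_2$, hence $L$ splits $B$, so $\Lb \subseteq \Bb$ and by dimension count $\Lb = \Bb$ is separable---this is the only place the separable-residue hypothesis is used, and it automatically kills all the inseparable configurations you were worried about. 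Writing $A_i = (L, a_i)$ and $B = (L,b)$ with $v(b)=1$ and $v(a_1) \equiv v(a_2) \bmod 2$, the split algebra $A_1 \ot A_2 \ot B \sim (L, a_1a_2b)$ is ramified, a contradiction. Your ``both'' subcase is this parity argument with a different choice of $L$; the lesson is to choose $L$ inside $A_1$ and let the hypothesis on $\Bb$ do the work, rather than lifting $\Bb$ and case-splitting on the residues of the $A_i$.
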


What really helps nail down the structure of $A_1 \ot A_2$ is the hypothesis that the residue algebra is a separable quadratic extension.  In particular, the division algebra underlying $A_1 \ot A_2$ is ``inertially split" in the language of \cite{JW}.

\begin{eg} \label{bad.char.eg}
Let $B$ be a quaternion division algebra over a field $F$ of characteristic 0 that is complete with respect to a dyadic discrete valuation, and suppose that the residue algebra $\Bb$ contains a separable quadratic extension $\Lb$ of $F$.  One can lift $\Lb/\Fb$ to a \emph{unique} quadratic extension $L/F$ and write $B = (L, b)$ for some $b \in \Fx$ that we may assume has value 0 or 1.  One finds two possibilities:
\begin{itemize}
\item If $v(b) = 0$, then $\Bb$ is a quaternion algebra over $\Fb$ generated by elements $i, j$ such that  
\[
\Fb(i) = \Lb, \ i^2 + i \in \Fb, \ j^2 = \bb, \eand ij = j(i + 1).
\]
In this case $B$ is unramified.
\item If $v(b) = 1$, then $\Bb = \Lb$ and $B$ is ramified.
\end{itemize}
For detailed explanation of these claims, see, e.g., \cite{GPe:comp}.
(Unlike the situation in the preceding section, there are more than just these two possibilities for general quaternion algebras over $F$, but in those other cases the residue algebra $\Bb$ is a purely inseparable extension of $\Fb$ of dimension 2 or 4 with $\Bb^2 \subseteq \Fb$.  Note that if $L/F$ is unramified and $\Lb/\Fb$ is purely inseparable, then $L/F$ is \emph{not} uniquely determined by $\Lb/\Fb$.) 
\end{eg}

\begin{proof}[Proof of Proposition \ref{char2}]
Put $B$ for the quaternion division $F$-algebra underlying $A_1 \ot A_2$.  By the hypothesis on its residue algebra, $B$ remains division when we extend scalars to the completion $\Fh$.  Hence at least one of $\Ah_1$, $\Ah_2$ is division; by Lemma \ref{krasner.2} (using that $A_1$ and $A_2$ are division) we may assume that both are division and that the valuation has the same ramification index on both algebras.  By Lemma \ref{krasner}, we may assume that $F$ is complete.

The residue division algebra $\Ab_1$ is distinct from $\Fb$ and we can find a quadratic subfield $L$ of $A_1$ so that the residue field $\Lb$ is a quadratic extension of $\Fb$.  
As $L$ is a maximal subfield of $A_1$, it is also one for $A_2$ and hence $B$.  Certainly, $\Lb$ is a subfield of the residue algebra $\Bb$; as both have dimension 2 over $\Fb$ they are equal.  That is, $\Lb$ is separable.  We write $A_i = (L, a_i)$ and $B = (L, b)$ for some $a_i, b \in \Fx$.  As the valuation does not ramify on $L$ we may assume that $v(b) = 1$.  By the previous paragraph, we may write $A_i = (L, a_i)$ for some $a_i \in \Fx$.  As $\Ab_i$ contains the separable extension $\Lb/\Fb$, we have $v(a_1) = v(a_2)$ as in Example \ref{bad.char.eg}.  Then $A_1 \ot A_2 \ot B$---which is split---is Brauer-equivalent to $(L, a_1 a_2 b)$, which is ramified (because $v(a_1 a_2 b) = 1 + 2v(a_i)$ is odd) and in particular not split.  This is a contradiction, which completes the proof of the proposition.
\end{proof}

\section{Unramified cohomology} \label{ur.sec}

For a field $F$ of characteristic not 2, we write $H^d(F)$ for the Galois cohomology group $H^d(F, \Zm2)$.  We remind the reader that for $d = 1$, this group is $\Fx / F^{\times 2}$ and for $d = 2$ it is identified with the 2-torsion in the Brauer group of $F$.

\begin{defn}
Fix an integer $d \ge 2$.  We define $H^d_u(F)$ to be the subgroup of $H^d(F)$ consisting of classes that are unramified at every nondyadic discrete valuation of $F$ (in the usual sense, as in \cite[p.~19]{GMS}) and are killed by $H^d(F) \ra H^d(R)$ for every real closure $R$ of $F$.
\end{defn}

\begin{eg} \label{hdu.eg}
\begin{enumerate}
\item A local field $F$ (of characteristic $\ne 2$) has $H^2_u(F) = 0$ if and only if $F$ is non-dyadic.  Indeed, $F$ has no orderings, so it suffices to consider the unique discrete valuation on $F$.
\item A global field $F$ (of characteristic $\ne 2$) has $H^2_u(F) = 0$ if and only if $F$ has at most 1 dyadic place.  This is Hasse's local-global theorem for central simple algebras \cite[8.1.17]{NSW2}.
\item A real closed field $F$ has $H^d_u(F) = 0$ for all $d \ge 2$; this is trivial.
\end{enumerate}
\end{eg}

A local field $F$ (of characteristic $\ne 2$) has $H^d_u(F) = 0$ for $d \ge 3$.  This is trivial because such a field has no orderings and cohomological 2-dimension $2$.

\begin{lem} \label{cd}
If $F(\sqrt{-1})$ has cohomological $2$-dimension $< d$, then $H^d_u(F)$ is zero.
\end{lem}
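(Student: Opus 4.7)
Set $s := \operatorname{cd}_2 F(\sqrt{-1})$, so $s < d$ by hypothesis. The plan is to reduce any $\alpha \in H^d_u(F)$ to a class in $H^s(F)$ of trivial total signature, and then to invoke Pfister's local--global principle.

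First I would recall Arason's exact sequence for the quadratic extension $F(\sqrt{-1})/F$: for each $k \ge 1$,
\[
H^k(F(\sqrt{-1})) \xrightarrow{\mathrm{cores}} H^k(F) \xrightarrow{(-1)\cup} H^{k+1}(F) \xrightarrow{\mathrm{res}} H^{k+1}(F(\sqrt{-1}))
\]
is exact. Because $H^m(F(\sqrt{-1})) = 0$ for $m > s$, cup product with $(-1)$ is surjective from $H^k(F)$ to $H^{k+1}(F)$ for every $k \ge s$, and is an isomorphism for every $k \ge s+1$. Iterating the surjectivity from $k = d-1$ down to $k = s$, any $\alpha \in H^d_u(F)$ can be written as $\alpha = (-1)^{d-s} \cup \alpha_0$ for some $\alpha_0 \in H^s(F)$.

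Next I would transfer the signature information from $\alpha$ to $\alpha_0$: for any real closure $R$ of $F$, each $H^n(R,\Zm2)$ equals $\Zm2$ and cup product with the nonzero class $(-1) \in H^1(R,\Zm2)$ is an isomorphism on $H^*(R,\Zm2)$. The hypothesis $\alpha|_R = 0$ thus forces $\alpha_0|_R = 0$ for every real closure $R$. By Pfister's local--global principle in Galois cohomology---a consequence of the analogous statement in the Witt ring together with Milnor's conjecture identifying $I^n/I^{n+1}$ with $H^n(F,\Zm2)$---some power of $(-1)$ must annihilate $\alpha_0$: there exists $N \ge 0$ with $(-1)^N \cup \alpha_0 = 0$ in $H^{s+N}(F)$. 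Combined with the injectivity of $(-1)\cup$ on $H^k(F)$ for $k \ge s+1$ established in the previous paragraph, this yields $\alpha = (-1)^{d-s} \cup \alpha_0 = 0$.

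The main obstacle is the appeal to Pfister's local--global principle in cohomological form. The Witt-ring version---that a form is torsion if and only if it has trivial total signature---is classical, but the translation to mod-$2$ Galois cohomology requires the Milnor conjecture. Everything else is formal manipulation of Arason's exact sequence. Note that the ``unramified at every nondyadic discrete valuation'' half of the definition of $H^d_u(F)$ plays no role in this argument: vanishing at every real closure alone already forces $\alpha = 0$ under the dimension hypothesis.
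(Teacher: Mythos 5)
Your proof is correct and is essentially the paper's argument: the same exact sequence for $F(\sqrt{-1})/F$ making cup product with $(-1)$ bijective in high degrees, combined with the fact that a class vanishing at every real closure is killed by some power of $(-1)$. The paper is just more direct---it applies that fact (Arason's Satz 3, proved directly in mod $2$ cohomology, so no appeal to the Milnor conjecture is needed) to the class $\alpha \in H^d(F)$ itself and uses that $\cdot(-1)^r \colon H^d(F) \to H^{d+r}(F)$ is an isomorphism for $n \ge d$, so your detour of descending to $\alpha_0 \in H^s(F)$ and transferring signatures down is unnecessary, though harmless.
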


\begin{proof}
By \cite[Satz 3]{Ar:prim}, an element $x \in H^d(F)$ is zero at every real closure if and only if $x \cdot (-1)^r$ is zero for some $r \ge 0$.  On the other hand, there is an exact sequence for every $n$ \cite[30.12(1)]{KMRT}:
\[
\begin{CD}
H^n(F(\sqrt{-1})) @>{\cores}>> H^n(F) @>{\cdot (-1)}>> H^{n+1}(F) @>{\res}>> H^{n+1}(F(\sqrt{-1})
\end{CD}
\]
For $n \ge d$, the two end terms are zero, so the cup product with $(-1)^r$ defines an isomorphism $H^d(F) \iso H^{d+r}(F)$ for all $r \ge 0$.
\end{proof}

We immediately obtain:

\begin{cor}
A global field $F$ of characteristic $\ne 2$ has $H^d_u(F) = 0$ for all $d \ge 3$. $\hfill\qed$
\end{cor}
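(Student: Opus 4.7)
The plan is to deduce the corollary directly from Lemma \ref{cd} by verifying that $F(\sqrt{-1})$ has cohomological $2$-dimension at most $2$ whenever $F$ is a global field of characteristic $\ne 2$. Once that is in hand, Lemma \ref{cd} applies for every $d \ge 3$ and gives the vanishing.

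So the work reduces to the cohomological dimension estimate. I split into the two cases in the definition of ``global field." If $F$ is a number field, then $F(\sqrt{-1})$ is totally imaginary (every archimedean place of $F(\sqrt{-1})$ is complex, since $-1$ is already a square there), so the classical theorem of Tate gives $\cd(F(\sqrt{-1})) = 2$, and in particular $\cd_2(F(\sqrt{-1})) = 2$. If $F$ is a finite extension of $\F_p(t)$ with $p$ odd, then $F$ itself already has no orderings (finite fields have none, and orderings do not extend along transcendental extensions of non-orderable fields), so $F(\sqrt{-1})$ has $\cd_2 \le 2$ by the well-known result that a global function field has cohomological dimension $2$ away from the characteristic.

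Having shown $\cd_2(F(\sqrt{-1})) \le 2 < d$ for every $d \ge 3$, Lemma \ref{cd} immediately yields $H^d_u(F) = 0$, completing the proof.

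There is essentially no obstacle here: the whole content is that the Lemma does the work, and the cohomological dimension statement for $F(\sqrt{-1})$ is standard. The only thing to be careful about is making sure both flavors of global field (number fields and function fields of odd characteristic) are handled, and that in the number field case one genuinely uses $\sqrt{-1}$ to remove the real places that would otherwise force infinite $\cd_2$.
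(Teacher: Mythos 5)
Your proof is correct and follows exactly the route the paper intends: the corollary is stated there as an immediate consequence of Lemma \ref{cd}, using precisely the standard fact that $F(\sqrt{-1})$ has cohomological $2$-dimension $2$ (totally imaginary number field by Tate, or global function field of odd characteristic). Your case split and the remark that $\sqrt{-1}$ kills the real places is just the spelled-out version of what the paper leaves implicit.
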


\begin{cor} \label{real.trdeg}
If $F$ has transcendence degree $d$ over a real-closed field, then $H^{d+1}_u(F) = 0$.
\end{cor}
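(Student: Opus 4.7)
The plan is to reduce to Lemma \ref{cd} by controlling the cohomological 2-dimension of $F(\sqrt{-1})$. Let $R$ be a real-closed field with $F/R$ of transcendence degree $d$. Since $R$ is real-closed, the field $R(\sqrt{-1})$ is algebraically closed, and $F(\sqrt{-1})$ is a field of transcendence degree $d$ over $R(\sqrt{-1})$.

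The key input is the classical theorem of Serre (\emph{Cohomologie Galoisienne}, II.4.2) stating that a field of transcendence degree $d$ over an algebraically closed field has cohomological dimension at most $d$. Applying this gives $\cd_2(F(\sqrt{-1})) \le d < d+1$. We are then in the hypotheses of Lemma \ref{cd} with the exponent $d$ of that lemma replaced by $d+1$, and we conclude that $H^{d+1}_u(F) = 0$.

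There is essentially no obstacle: the argument is just the one-line reduction above combined with a citation to Serre. The only mild subtlety is that Serre's bound is stated for the full cohomological dimension (or for $\cd_p$ with $p$ different from the residue characteristic when one works over a base other than an algebraically closed field), but here the base $R(\sqrt{-1})$ is algebraically closed, so $\cd = \cd_2$ of the base is zero and Serre's bound applies directly to $\cd_2$.
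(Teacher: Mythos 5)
Your argument is correct and is essentially the paper's own proof: the paper likewise notes that $F(\sqrt{-1})$ has transcendence degree $d$ over an algebraically closed field, invokes Serre's bound on cohomological dimension, and concludes via Lemma \ref{cd}. The only (immaterial) difference is the precise reference within \emph{Cohomologie Galoisienne}.
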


\begin{proof}
$F(\sqrt{-1})$ has transcendence degree $d$ over an algebraically closed field, hence has cohomological dimension $\le d$ \cite[\S{II.4.3}]{SeCG}.
\end{proof}

We define a discrete valuation of an extension $F/F_0$ to be a discrete valuation on $F$ that vanishes on $F_0$.

\begin{prop} \label{ext.prop}
For every extension $F/F_0$, the homomorphism $\res \!: H^d(F_0) \ra H^d(F)$ restricts to a homomorphism $H^d_u(F_0) \ra H^d_u(F)$.
If additionally $F/F_0$ is unirational and 
\begin{equation} \label{ext.hyp}
\parbox{4.25in}{every class in $H^d(F)$ that is unramified at every discrete valuation of $F/F_0$ comes from $H^d(F_0)$,}
\end{equation}
then the homomorphism $H^d_u(F_0) \ra H^d_u(F)$ is an isomorphism.
\end{prop}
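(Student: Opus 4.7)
The first claim is essentially functoriality. Given $x \in H^d_u(F_0)$, let $v$ be a non-dyadic discrete valuation on $F$. If $v$ is trivial on $F_0$, then $F_0$ lies in the valuation ring and $\res_F(x)$ is visibly unramified at $v$. Otherwise $v$ restricts to a nontrivial discrete valuation $v_0$ on $F_0$ whose residue field embeds in that of $v$ and so is non-dyadic; functoriality of the residue map yields $\partial_v(\res_F(x)) = e \cdot \res(\partial_{v_0}(x)) = 0$, where $e$ is the ramification index. For the ordering condition, any real closure $R$ of $F$ contains a real closure $R_0$ of $F_0$---namely the relative algebraic closure of $F_0$ in $R$---so $\res_R(x)$ factors through $\res_{R_0}(x) = 0$.

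Assume now that $F/F_0$ is unirational and satisfies \eqref{ext.hyp}, and fix an embedding $F \subseteq K := F_0(x_1, \ldots, x_n)$ over $F_0$. Injectivity of $H^d_u(F_0) \to H^d_u(F)$ reduces to injectivity of $\res \!: H^d(F_0) \to H^d(K)$, a standard specialization fact: each restriction $H^d(F_0(x_1,\ldots,x_i)) \to H^d(F_0(x_1,\ldots,x_{i+1}))$ is left-inverted on unramified classes by the specialization $x_{i+1} \mapsto 0$, and the image of restriction is always unramified there.

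For surjectivity, take $y \in H^d_u(F)$. Every discrete valuation $v$ of $F/F_0$ has residue field containing $F_0$ and hence of characteristic $\ne 2$, so $v$ is non-dyadic and $y$ is unramified at $v$. Hypothesis \eqref{ext.hyp} then produces $x \in H^d(F_0)$ with $\res_F(x) = y$, and the task is to check $x \in H^d_u(F_0)$. For a non-dyadic DV $v_0$ of $F_0$, take its Gauss extension to $K$---a discrete valuation with value group $\Z$ and residue field $\overline{F_0}(x_1, \ldots, x_n)$---and restrict to a DV $v$ on $F$; this $v$ is non-dyadic and extends $v_0$ with ramification index $1$. Since $y \in H^d_u(F)$ we have $\partial_v(y) = 0$, so $\res(\partial_{v_0}(x)) = 0$ in $H^{d-1}(\overline{F}^v)$. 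But $H^{d-1}(\overline{F_0}) \to H^{d-1}(\overline{F}^v)$ is injective, being a factor of the injective specialization map $H^{d-1}(\overline{F_0}) \to H^{d-1}(\overline{F_0}(x_1, \ldots, x_n))$; hence $\partial_{v_0}(x) = 0$. For an ordering $\sigma_0$ of $F_0$ with real closure $R_0$, extend $\sigma_0$ to an ordering on $K$ (orderings always extend to purely transcendental extensions), restrict to an ordering on $F$, and take a real closure $R_F$ of $F$ at this ordering. The relative algebraic closure of $F_0$ in $R_F$ is a real closure of $F_0$ at $\sigma_0$, so identifies with $R_0$, giving $R_0 \subseteq R_F$. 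Then $\res_{R_F}(x) = \res_{R_F}(y) = 0$, forcing $\res_{R_0}(x) = 0$ because restriction on $H^d$ between real closed fields is an isomorphism of copies of $\Zm2$. The main obstacle is this surjectivity step: for each non-dyadic DV or ordering on $F_0$, one must manufacture a compatible object on $F$ at which the hypothesis $y \in H^d_u(F)$ can be invoked, and the detour through the purely transcendental $K$---where Gauss extensions and ordered extensions exist by inspection---is the crux.
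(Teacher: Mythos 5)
Your argument is correct and follows essentially the same route as the paper: extend orderings and (via Gauss extensions) nondyadic discrete valuations from $F_0$ to the purely transcendental overfield $F_0(x_1,\ldots,x_n)$ supplied by unirationality, restrict to $F$, and conclude using injectivity of $H^{d-1}(\Fb_0)\ra H^{d-1}(\Fb)$ together with the isomorphism $H^d(R_0)\cong H^d(R)$ for nested real closed fields (the paper's Lemma \ref{rc.lem}). The only cosmetic difference is that you prove injectivity of $H^d(F_0)\ra H^d(F)$ by a specialization argument where the paper simply cites \cite[p.~28]{GMS}, and you spell out the ``easy to see'' unramifiedness in the first claim.
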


\begin{proof}
Fix $y \in H^d_u(F_0)$.  Every ordering of $F$ restricts to an ordering on $F_0$; write $R$ and $R_0$ for the corresponding real closures.  By hypothesis, the image of $y$ in $H^d(R_0)$ is zero, hence it also has zero image under the composition $H^d(F_0) \ra H^d(R_0) \ra H^d(R)$.
It is easy to see that $\res(y)$ is unramified at every nondyadic discrete valuation of $F$, and the first claim follows.

We now prove the second sentence.  The natural map $H^d(F_0) \ra H^d(F)$ is injective by \cite[p.~28]{GMS}.
Fix $x \in H^d_u(F)$.  As it is unramified at every discrete valuation of $F/F_0$, it is the restriction of some element $x_0 \in H^d(F_0)$ by \eqref{ext.hyp}.  We will show that $x_0$ belongs to $H^d_u(F_0)$.

Fix an extension $E := F_0(x_1, x_2, \ldots, x_n)$ containing $F$.  Every ordering on $F_0$ extends to an ordering $v$ on $E$ (and hence also on $F$) by a recipe as in \cite[pp.~9--11]{Lam:surv}.  As the map $H^d((F_0)_v) \ra H^d(F_v)$ is an injection by Lemma \ref{rc.lem} below, we deduce that $x_0$ is killed by every real-closure of $F_0$. 

One extends a nondyadic discrete valuation $v$ on $F_0$ to $E$ by setting $v(x_i) = 0$ and $\bar{x}_i$ to be transcendental over $\Fb_0$ as in \cite[\S{VI.10.1}, Prop.~2]{Bou:ac}; then the residue field of $\Eb$ is $\Fb_0(x_1, x_2, \ldots, x_n)$ and the natural map $H^{d-1}(\Fb_0) \ra H^{d-1}(\Eb)$ is injective, hence so is $H^{d-1}(\Fb_0) \ra H^{d-1}(\Fb)$.  It follows that the image of $x_0$ in $H^{d-1}(\Fb_0)$ is zero, which completes the proof.
\end{proof}

Here is the promised lemma:

\begin{lem} \label{rc.lem}
If $R_1 \subseteq R_2$ are real-closed fields, then the natural map $H^d(R_1) \ra H^d(R_2)$ is an isomorphism for every $d \ge 0$.
\end{lem}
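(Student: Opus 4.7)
The plan is to reduce the statement to a formal fact about group cohomology by identifying the absolute Galois groups involved. By the Artin--Schreier theorem, a real-closed field $R$ satisfies $R(\sqrt{-1}) = \bar R$, and its absolute Galois group $\mathrm{Gal}(R(\sqrt{-1})/R)$ is cyclic of order $2$, generated by complex conjugation. Consequently $H^d(R) \cong H^d(\Zm2, \Zm2) \cong \Zm2$ for every $d \ge 0$, so both source and target of the map in question are cyclic of order $2$, and it suffices to show the map is nonzero in each degree.

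Next I would fix a compatible embedding $R_1(\sqrt{-1}) \hookrightarrow R_2(\sqrt{-1})$ extending $R_1 \subseteq R_2$ and consider the restriction-of-automorphisms map $\mathrm{Gal}(R_2(\sqrt{-1})/R_2) \to \mathrm{Gal}(R_1(\sqrt{-1})/R_1)$. This sends the nontrivial element---complex conjugation on $R_2(\sqrt{-1})$, which swaps $\pm\sqrt{-1}$---to its restriction, which is complex conjugation on $R_1(\sqrt{-1})$ and hence nontrivial. So the map of Galois groups is an isomorphism of $\Zm2$'s, and the induced map on group cohomology is an isomorphism $H^d(R_1) \to H^d(R_2)$ for every $d$. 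An alternative presentation is to note that $H^*(R, \Zm2)$ is a polynomial ring $\mathbb{F}_2[t]$ on $t = (-1) \in H^1(R) = R^{\times}/R^{\times 2}$, observe that restriction is a ring map sending $(-1)_{R_1} \mapsto (-1)_{R_2}$, and conclude since $-1$ remains a nonsquare in $R_2$.

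I do not anticipate any real obstacle: the argument is essentially formal once the Artin--Schreier description of $\mathrm{Gal}(\bar R/R)$ is in hand. The only point requiring a moment's care is verifying that the restriction map on Galois groups is an \emph{isomorphism} rather than merely a homomorphism, and this is immediate from the explicit identification of the nontrivial element as complex conjugation in both cases.
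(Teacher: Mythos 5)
Your proposal is correct and follows essentially the same route as the paper: both identify $H^d(R_i)$ with $H^d(\Zm2,\Zm2)\cong\Zm2$ via Artin--Schreier and then see that restriction carries the nonzero class (the paper phrases this as the generator $(-1)^d$, your second presentation is the same, and your Galois-group-isomorphism argument is an equivalent packaging). No gaps.
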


\begin{proof}
$H^d(R_i)$ is the usual group cohomology $H^d(\Zm2, \Zm2)$.  Obviously $H^0(R_i) = H^1(R_i) = \Zm2$, and 2-periodicity for the cohomology of finite cyclic groups shows that $H^d(R_i) = \Zm2$ for all $d \ge 0$, with nonzero element $(-1)^d$.
\end{proof}

\begin{eg} \label{retract.u}
Recall that an extension $F$ of $F_0$ is \emph{retract rational} if it is the field of fractions of an $F_0$-algebra domain $S$ and there are $F_0$-algebra homomorphisms $S \ra F_0[x_1, \ldots, x_n](1/s) \ra S$ whose composition is the identity on $S$, where $F_0[x_1, \ldots, x_n]$ is a polynomial ring and $s \in F_0[x_1, \ldots, x_n]$ is a nonzero element.  Obviously such an extension is unirational, and \eqref{ext.hyp} holds by \cite[11.8]{Salt:lect} or \cite[Prop.~2.15]{M:urel}, so $H^d_u(F_0) \cong H^d_u(F)$.
\end{eg}

\section{Transparent fields} \label{transparent.sec}

\begin{defn} \label{ur.def}
Fix a field $F$ of characteristic $\ne 2$.  For each dyadic valuation $v$, fix a completion $\Fh$ of $F$.  Write $K$ for the limit of the finite unramified extensions of $\Fh$ with separable residue field.  This extension is called the \emph{maximal unramified extension of $\Fh$} (which highlights the fact that usually the word ``unramified" is quite a bit stronger than how we are using it).  There is a well-defined ramification map 
\[
H^2(K/\Fh, \mu_2) \ra H^1(\Fb, \Zm2) \quad \text{s.t.} \quad (a, b) \mapsto \begin{cases}
0 & \text{if $v(a) = v(b) = 0$}\\
(\overline{a}) & \text{if $v(a) = 0$ \& $v(b) = 1$}
\end{cases}
\]
We say that $F$ is \emph{transparent} if for every division algebra $D \ne F$ of exponent 2 such that $[D]$ belongs to $H^2_u(F)$, there is some dyadic place $v$ of $F$ and completion $\Fh$ such that 
\begin{enumerate}
\item $D \ot \Fh$ is split by the maximal unramified extension of $\Fh$, and
\item the image of $[D]$ in $H^1(\Fb, \Zm2)$ is not zero.
\end{enumerate}
Another way to phrase these hypotheses is: \emph{$D \ot \Fh$ is division and the residue division algebra $\bar{D}$ is a separable quadratic extension of $\Fb$.}

Roughly speaking, transparent fields are those for which nonzero 2-torsion elements of the Brauer group can be detected using ramification.
\end{defn}

\begin{eg}\begin{enumerate}
\setcounter{enumi}{-1}
\item If $H^2_u(F) = 0$---e.g., if $F$ is real closed---then $F$ is (vacuously, trivially) transparent.
\item \emph{If $F$ is local, then $F$ is transparent.}  If $F$ is nondyadic, this is Example \ref{hdu.eg}(1).  Otherwise, for each nonzero $x \in H^2_u(F)$, the residue division algebra is necessarily a separable quadratic extension of $\Fb$, because $\Fb$ is perfect with zero Brauer group.
\item If $F$ is global, then $F$ is transparent.  Indeed, $H^2_u(F)$ consists of classes ramified only at dyadic places, and every such place has finite residue field.
\end{enumerate}
\end{eg}

The \emph{unramified Brauer group} of an extension $F/F_0$ is the subgroup of the Brauer group of $F$ consisting of elements that are unramified at every discrete valuation of $F/F_0$.

\begin{prop} 
If $F_0$ is transparent, $F/F_0$ is unirational, and the unramified Brauer group of $F/F_0$ is zero, then $F$ is transparent.
\end{prop}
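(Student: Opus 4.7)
The plan is to lift the division algebra $D$ back to $F_0$, invoke transparency there, and transfer the resulting dyadic place to $F$ via a Gauss-type extension of valuations that preserves the relevant residue structure.

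Concretely, given a division $F$-algebra $D \ne F$ of exponent $2$ with $[D] \in H^2_u(F)$, I would first apply Proposition \ref{ext.prop} with $d=2$: the hypotheses that $F/F_0$ is unirational with zero unramified Brauer group supply exactly \eqref{ext.hyp} at $d=2$, yielding an isomorphism $H^2_u(F_0) \cong H^2_u(F)$. So $[D]$ is the restriction of some $x_0 \in H^2_u(F_0)$; letting $D_0$ be the central division $F_0$-algebra representing $x_0$, injectivity of $H^2(F_0) \hookrightarrow H^2(F)$ forces $D_0 \ne F_0$. Transparency of $F_0$ applied to $D_0$ then furnishes a dyadic place $v_0$ of $F_0$, with completion $\Fh_0$ and residue field $\Fb_0$, such that $[D_0 \otimes \Fh_0]$ is killed by the maximal unramified extension of $\Fh_0$ and has nonzero ramification $\chi_0 \in H^1(\Fb_0, \Zm2)$, corresponding to a separable quadratic extension $\Lb_0/\Fb_0$.

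Next I would extend $v_0$ to a discrete valuation $v$ on $F$ using unirationality: fix an embedding $F \subseteq E := F_0(x_1, \ldots, x_n)$, extend $v_0$ to $E$ by the Gauss prescription $v(x_i) = 0$ with $\bar x_i$ transcendental over $\Fb_0$ (as in the proof of Proposition \ref{ext.prop}), and restrict to $F$. The resulting $v$ is dyadic because $\Fb \supseteq \Fb_0$, its residue field $\Fb$ embeds into $\Fb_0(x_1, \ldots, x_n)$, and its completion $\Fh$ contains $\Fh_0$. Writing $K$ for the maximal unramified extension of $\Fh_0$, the compositum $K \cdot \Fh$ is unramified over $\Fh$: the value-group extension is trivial, and the residue extension is separable because $\Fb/\Fb_0$ lies inside the purely transcendental extension $\Fb_0(x_1, \ldots, x_n)/\Fb_0$. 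Thus $[D \otimes \Fh]$---the image of $[D_0 \otimes \Fh_0]$ under $H^2(\Fh_0) \to H^2(\Fh)$---is killed by $K \cdot \Fh$ and hence by the maximal unramified extension of $\Fh$, giving condition~(1) of the transparency definition. Functoriality of the ramification map under this unramified base change identifies the ramification of $D \otimes \Fh$ with the restriction of $\chi_0$ in $H^1(\Fb, \Zm2)$, corresponding to the quadratic \'etale $\Fb$-algebra $\Lb_0 \otimes_{\Fb_0} \Fb$.

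The main obstacle is verifying condition~(2), that this ramification is nonzero---equivalently, that $\Lb_0 \otimes_{\Fb_0} \Fb$ remains a field. This follows from the standard fact that $\Fb_0$ is algebraically closed in the purely transcendental extension $\Fb_0(x_1, \ldots, x_n)$: the proper finite algebraic extension $\Lb_0/\Fb_0$ cannot embed into $\Fb \subseteq \Fb_0(x_1, \ldots, x_n)$, so $\Lb_0 \otimes_{\Fb_0} \Fb$ is a (separable) quadratic field extension of $\Fb$, completing the verification.
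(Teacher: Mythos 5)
Your proposal is correct and follows essentially the same route as the paper: descend the class to $H^2_u(F_0)$ via Proposition \ref{ext.prop}, apply transparency of $F_0$, extend the dyadic valuation to $F$ by the Gauss construction from the proof of that proposition, and verify conditions (1) and (2) via the maximal unramified extension and the injectivity of $H^1(\Fb_0,\Zm2) \ra H^1(\Fb,\Zm2)$. Your explicit check that $\Lb_0 \otimes_{\Fb_0} \Fb$ stays a field is just a concrete form of the injectivity argument the paper cites.
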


\begin{proof}
Suppose there is a nonzero $x \in H^2_u(F)$.  By Proposition \ref{ext.prop}, $x$ is the image of some nonzero $x_0 \in H^2_u(F_0)$ and there is a dyadic valuation $v$ of $F_0$ such that conditions (1) and (2) of Definition \ref{ur.def} holds for the division algebra $D_0$ represented by $x_0$.  Extend $v$ to a valuation on $F$ as in the proof of Proposition \ref{ext.prop}.  The maximal unramified extension of $\Fh$ (with respect to $v$) contains the maximal unramified extension of $\Fh_0$, so it kills $x_0$ and hence also $x$; this verifies condition (\ref{ur.def}.1).  For condition (\ref{ur.def}.2), it suffices to note that the map $H^1(\Fb_0, \Zm2) \ra H^1(\Fb, \Zm2)$ is injective as in the proof of Proposition \ref{ext.prop}.
\end{proof}

\begin{eg} \label{retract.transparent}
A retract rational extension of a transparent field is transparent, cf.~Example \ref{retract.u}.
\end{eg}

\section{Proof of the main theorem}

We now prove Theorem \ref{MT}.  We assume that $D_1$ and $D_2$ are not isomorphic and that there is some quadratic extension of $F$ that is contained in both of them, hence that $D_1 \ot D_2$ has index 2.  We will produce a quadratic extension of $F$ that is contained in one and not the other.

Suppose first that there is a real closure $R$ of $F$ that does not split $D_1 \ot D_2$.  Then one of the $D_i$ is split by $R$ and the other is not; say $D_1$ is split.  We can write $D_1 = (a_1, b_1)$ where $a_1$ is positive in $R$.
The field $F(\sqrt{a_1})$ is contained in $D_1$, but not contained in $D_2$ because $D_2 \ot R$ is division.

If there is a nondyadic discrete valuation of $F$ where $D_1 \ot D_2$ is ramified, then Proposition \ref{crux.deg2} provides the desired quadratic extension.

Finally, suppose that $D_1 \ot D_2$ is split by every real closure and is unramified at every nondyadic place, so $D_1 \ot D_2$ belongs to $H^2_u(F)$.  By the original hypotheses on $D_1$ and $D_2$, the class $[D_1 \ot D_2]$ is represented by a quaternion division algebra $B$.  But $F$ is transparent, so Proposition \ref{char2} provides a quadratic extension of $F$ that embeds in $D_1$ or $D_2$ but not the other. $\hfill\qed$

\section{Pfister forms and nondyadic valuations} \label{crux.sec}

With the theorem from the introduction proved, we now set to proving a version of it for Pfister forms.  The goal of this section is the following proposition, which is an analogue of Proposition \ref{crux.deg2}.

\begin{prop} \label{crux}
Let $F$ be a field with a nondyadic discrete valuation.  Let $\phi_1, \phi_2$ be $d$-Pfister forms over $F$ for some $d \ge 2$.  If 
\begin{enumerate}
\item $\phi_1$ is ramified and $\phi_2$ is not; or
\item $\phi_1$ and $\phi_2$ both ramify, but with different ramification,
\end{enumerate}
then there is a $(d-1)$-Pfister form $\gamma$ over $F$ such that $\gamma$ divides $\phi_1$ or $\phi_2$ but not both.
\end{prop}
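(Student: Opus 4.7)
Mirror the proof of Proposition \ref{crux.deg2}: normalize each $\phi_i$ relative to $v$, then identify a candidate $(d-1)$-Pfister divisor and use the second residue map $\partial_v \colon H^d(F, \Zm2) \to H^{d-1}(\Fb, \Zm2)$ to rule out divisibility into the other form. For the normalization, iterating the identity $\pform{a, b} \cong \pform{-ab, b}$ (the Pfister-form version of the $K$-theory identity underlying Lemma \ref{normalize.cyclic}) absorbs all powers of the uniformizer into a single slot, so that each $\phi_i$ is presented either as $\pform{u_1^{(i)}, \dots, u_{d-1}^{(i)}, \pi}$ (ramified case, with residue $\bar\eta_i := (\bar u_1^{(i)}) \cdots (\bar u_{d-1}^{(i)}) \ne 0$) or as $\pform{w_1^{(i)}, \dots, w_d^{(i)}}$ with all slots units (unramified case).

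\textbf{Case (2).} Take $\gamma := \pform{u_1^{(2)}, \dots, u_{d-1}^{(2)}}$. By construction $\gamma \mid \phi_2 = \gamma \otimes \pform{\pi}$. If also $\gamma \mid \phi_1 = \gamma \otimes \pform{e}$, the residue formula $\partial_v([\gamma] \cdot (e)) = [\bar\gamma] \cdot v(e)$, valid for unramified $\gamma$, gives $\bar\eta_1 = \bar\eta_2 \cdot v(e)$. Since $\bar\eta_1 \ne 0$ this forces $v(e)$ odd and $\bar\eta_1 = \bar\eta_2$, contradicting the hypothesis of case~(2).

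\textbf{Case (1).} Try first $\gamma := \pform{u_1, \dots, u_{d-2}, \pi}$, a ramified $(d-1)$-Pfister dividing $\phi_1 = \gamma \otimes \pform{u_{d-1}}$. If $\gamma \mid \phi_2$, then $\phi_2 \cong \gamma \otimes \pform{c}$, and the class $[\gamma] \cdot (c) = (u_1) \cdots (u_{d-2})(\pi)(c)$ lies entirely in the ramified summand $H^{d-1}(\Fb, \Zm2) \cdot (\pi)$ of the standard splitting $H^d(\Fh, \Zm2) = H^d(\Fb, \Zm2) \oplus H^{d-1}(\Fb, \Zm2) \cdot (\pi)$, since every monomial carries a factor $(\pi)$. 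But $\phi_2$ unramified forces its $H^d(\Fb)$-component over $\Fh$ to equal $[\bar\phi_2]$, so $\bar\phi_2 = 0$. If $\bar\phi_2 \ne 0$ this is a contradiction and $\gamma$ does the job. Otherwise $\bar\phi_2$ is hyperbolic (which can occur with $\phi_2$ itself anisotropic); switch to constructing an unramified divisor $\gamma'$ of $\phi_2$: because the pure subform $\bar\phi_2'$ is isotropic and represents every element of $\Fb^\times$, Hensel's lemma (for the nondyadic $v$) lifts these representations to slots of $\phi_2$ over $F$ exhausting every square-class of $\Fb^\times$, and chain-equivalence manipulations of the representation of $\phi_2$ then let me assemble a $(d-1)$-Pfister divisor $\gamma'$ of $\phi_2$ with reduction $[\bar{\gamma'}] \ne \bar\eta_1$. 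Applying the residue formula to $\phi_1$, the relation $\gamma' \mid \phi_1$ would force $[\bar{\gamma'}] = \bar\eta_1$, a contradiction.

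\textbf{Main obstacle.} The delicate step is the subsidiary situation in case~(1) in which $\phi_2 \otimes \Fh$ is hyperbolic: then every $(d-1)$-Pfister divides $\phi_2$ over $\Fh$, so divisibility over $F$ must be detected intrinsically, and one must exploit the flexibility of representations of $\phi_2$ (given by $\bar\phi_2$ being split) to build a $(d-1)$-Pfister divisor of $\phi_2$ with a prescribed reduction---straightforward for $d = 2$ (single slot) but requiring careful chain-equivalence bookkeeping for $d \ge 3$.
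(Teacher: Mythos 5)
Your case (2) and the first branch of case (1) are essentially workable, but the second branch of case (1) --- the subcase where $\phi_2 \otimes \Fh$ is hyperbolic, which is exactly the hard part of the proposition --- is not proved: you only sketch a plan (``Hensel's lemma lifts these representations to slots \ldots chain-equivalence manipulations \ldots'') and you flag it yourself as the main obstacle. That plan also starts from an unjustified normalization: the identity $\pform{x\pi, y\pi} \cong \pform{-xy, y\pi}$ only reduces a Pfister form to a presentation with at most one slot of odd value; an unramified $\phi_2$ may still have its last slot of value $1$ (with hyperbolic residue form), so the form $\bar\phi_2$ and its pure part $\bar\phi_2'$, on which your construction rests, need not be defined from an $F$-presentation at all. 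Moreover you aim at something stronger than needed. It suffices to produce a $(d-1)$-Pfister divisor of $\phi_2$ that becomes \emph{hyperbolic} over $\Fh$, rather than one with a prescribed nonzero reduction: since $\phi_2 \otimes \Fh$ is hyperbolic, an approximation argument (the paper's Lemma \ref{adrian}) yields a binary subform $q \cong \qform{r,s}$ of $\phi_2$ over $F$ that is isotropic over $\Fh$; roundness and the pure subform theorem then give $\phi_2 \cong \pform{-rs, c_2, \ldots, c_d}$, and $\psi := \pform{-rs, c_2, \ldots, c_{d-1}}$ divides $\phi_2$ and is hyperbolic over $\Fh$ because $-rs \in \Fh^{\times 2}$, while $\psi$ cannot divide $\phi_1$ since $\phi_1 \otimes \Fh$ is ramified, hence anisotropic. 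Your prescribed-reduction construction is left unproved precisely where the content of the proposition lies, so the proposal has a genuine gap.

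A secondary point: your residue-map arguments are a legitimate alternative to the paper's function-field argument (the paper uses the quadric of $\gamma$ and \cite[X.4.10]{Lam} in case (2)), but as written they lean on deep input. To contradict ``different ramification'' (non-isomorphic residue \emph{forms}) from $\bar\eta_1 = \bar\eta_2$ in $H^{d-1}(\Fb, \Zm2)$, and to pass from ``the class of $\bar\phi_2$ vanishes'' to ``$\bar\phi_2$ is hyperbolic,'' you need injectivity of the cohomological invariant on Pfister forms, i.e.\ norm-residue (Milnor conjecture) machinery. This is avoidable: run the identical computation with Springer's second residue homomorphism $W(F) \to W(\Fb)$. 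Then $\gamma \mid \phi_1$ forces the second residue form of $\phi_1$ to be similar to $\bar\gamma = \ram(\phi_2)$, and similar Pfister forms are isomorphic, contradicting the hypothesis elementarily; the same rephrasing repairs the dichotomy in your case (1) without invoking any identification of classes with forms.
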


The proof given below for case (1) of the proposition was suggested to us by Adrian Wadsworth, and is much simpler than our original proof for that case.  It makes use of the following lemma:

\begin{lem}\label{adrian}
Let $\psi = \qform{a_1, \ldots, a_n}$ be a nondegenerate quadratic form over a field $F$ with a nondyadic discrete valuation $v$.  If $v(a_i) = 0$ for all $i$ and $\psi$ is isotropic over the completion $\Fh$ of $F$, then there is a $2$-dimensional nondegenerate subform $q$ of $\psi$ such that $q \ot \Fh$ is isotropic.
\end{lem}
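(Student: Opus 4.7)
My plan is to exploit the principle that over a complete nondyadic discrete valuation ring, isotropy of a nondegenerate form with unit coefficients can be detected on the residue field (via Hensel's Lemma), and to use this to lift a 2-dimensional isotropic subform of $\bar\psi := \qform{\bar a_1, \ldots, \bar a_n}$ up to $F$. Observe that $\bar\psi$ is nondegenerate because each $\bar a_i \ne 0$.

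First I would reduce the hypothesis. Scaling a given $\Fh$-isotropic vector of $\psi$ by a power of a uniformizer, I may assume its coordinates lie in the valuation ring $O_v$ with at least one being a unit; then the reduction $\bar v \in \Fb^n$ is nonzero and satisfies $\bar\psi(\bar v) = 0$. Nondegeneracy of $\bar\psi$ yields $\bar w \in \Fb^n$ with nonzero bilinear pairing $B_{\bar\psi}(\bar v, \bar w) \ne 0$, and the 2-dimensional subspace $\bar W := \Fb\bar v + \Fb\bar w$ has Gram matrix with nonzero determinant $-B_{\bar\psi}(\bar v, \bar w)^2$ and contains the isotropic $\bar v$; so $\bar\psi|_{\bar W}$ is a hyperbolic plane.

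Next I would lift: pick $v', w' \in F^n$ with entries in $O_v$ reducing coordinate-wise to $\bar v, \bar w$. Their $\Fb$-linear independence implies $F$-linear independence, so $W := Fv' + Fw' \subseteq F^n$ is 2-dimensional. The Gram matrix of $q := \psi|_W$ in the basis $(v', w')$ has entries in $O_v$ and reduces to the Gram matrix of $\bar\psi|_{\bar W}$, which has nonzero determinant; hence the Gram determinant of $q$ is a unit, so $q$ is a nondegenerate 2-dimensional subform of $\psi$, and its reduction $\bar q$ is the hyperbolic plane $\bar\psi|_{\bar W}$.

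To conclude: a 2-dimensional nondegenerate form over $\Fh$ is isotropic iff $-1$ times its Gram determinant is a square in $\Fh$. Since the Gram determinant of $q$ is a unit and $v$ is nondyadic (so $2 \in O_v^\times$), Hensel's Lemma says a unit in $O_v$ is a square in $\Fh$ iff its reduction is a square in $\Fb$; the latter holds because $\bar q$ is isotropic, giving isotropy of $q \otimes \Fh$. The only delicate point is the lifting step, where I have to know that the $F$-span of the lifts really is 2-dimensional and inherits nondegeneracy from its reduction, but the residue homomorphism $O_v \to \Fb$ preserves both linear independence and nonvanishing of determinants, so this presents no real obstacle.
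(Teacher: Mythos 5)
Your proof is correct, and it reaches the conclusion by a somewhat different mechanism than the paper's. The paper never introduces the bilinear form or lifts a subspace: it takes the isotropic vector over $\Fh$, normalizes so all coordinates lie in the valuation ring with at least one unit entry, lifts only the residues of the coordinates to elements $t_i$ of the valuation ring of $F$, and then splits the diagonal sum at the first unit coordinate, setting $r = \sum_{i\le j} a_i t_i^2$ and $s = \sum_{i>j} a_i t_i^2$. Both are units with $\bar r + \bar s = 0$, the representation criterion gives $\qform{r,s}$ as a subform of $\psi$ (each diagonal block represents the corresponding unit), and $\qform{r,s}\otimes\Fh$ is isotropic because $-rs$ is residually a square. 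You instead pass to the residue form, complete the reduced isotropic vector $\bar v$ to a hyperbolic plane via a vector $\bar w$ with $B_{\bar\psi}(\bar v,\bar w)\neq 0$, lift that plane to $F$, and verify nondegeneracy and isotropy of the restriction through the Gram determinant and Hensel's Lemma. Both arguments rest on the same two facts (unit coefficients, and nondyadic Hensel detection of squares on residues); the paper's version is a bit leaner since it needs no second vector, no Gram matrices, and no explicit appeal to the discriminant criterion, while yours is more structural in that it exhibits a residually hyperbolic plane and lifts it, which makes the isotropy of the lifted binary form transparent. One small point worth making explicit in your write-up: $\bar v$ and $\bar w$ are linearly independent because $B_{\bar\psi}(\bar v,\bar v)=0$ while $B_{\bar\psi}(\bar v,\bar w)\neq 0$, so $\bar W$ really is two-dimensional; and the lifts $v', w'$ should be taken with entries in the valuation ring of $F$ (not of $\Fh$), which is possible since the two valuation rings have the same residue field.
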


\begin{proof}
As $\phi \ot \Fh$ is isotropic, there exist $x_i \in \Fh$ not all zero so that $\sum a_i x_i^2 = 0$.  Scaling by a uniformizer, we may assume that $v(x_i) \ge 0$ for all $i$ and that at least one $x_i$ has value 0.  Let $j$ be the smallest index such that $v(x_j) = 0$; there must be at least two such indices, so $j < n$.  

Fix $t_i$ in the valuation ring of $F$ so that $\bar{t}_i = \bar{x}_i$, and put
\[
r := \sum_{i=1}^j a_i t_i^2 \eand s := \sum_{i = j+1}^n a_i t_i^2.
\]
By construction of $j$, $v(r) = 0$.  Since $r + s = 0$, also $v(s) = 0$.  Now $\qform{r, s}$ is a subform of $\psi$ (over $F$), and
\[
\bar{r} + \bar{s} = \overline{r+s} = \overline{\sum a_i t_i^2} = \overline{\sum a_i x_i^2} = 0.
\]
So $\qform{r,s}$ is isotropic over $\Fh$ \cite{Lam}.
\end{proof}

\begin{proof}[Proof of Proposition \ref{crux}]
Obviously, $\pform{ x\pi, y\pi} \cong \pform{-xy, y\pi}$, so we may assume that 
\[
\phi_i = \pform{a_{i2}, a_{i3}, \ldots, a_{id}, b_i}
\]
where $a_{ij}$ has value 0 and $b_i$ has value 0 or 1.

In case (2), $b_1$ and $b_2$ have value 1 and the residue forms 
\[
\ram( \phi_i) = \pform{ \ba_{i2}, \ba_{i3}, \ldots, \ba_{id}}
\]
are not isomorphic. We take $\gamma$ to be $\pform{a_{12}, a_{13}, \ldots, a_{1d}}$; it divides $\phi_1$.  The projective quadric $X$ defined by $\gamma = 0$ is defined over the discrete valuation ring, and $\Fb(X)$ does not split $\ram(\phi_2)$ by \cite[X.4.10]{Lam}.
Hence $F(X)$ does not split $\phi_2$ and $\gamma$ does not divide $\phi_2$.

Now suppose we are in case (1); obviously $v(b_1) = 1$.
If $\phi_2 \ot \Fh$ is anisotropic, then as $\phi_2$ does not ramify, $v(b_2) = 0$ and the second residue form of $\phi_2$ is zero.  Certainly $\gamma := \pform{a_{13}, \ldots, a_{1d}, b_1}$ divides $\phi_1$.  On the other hand, every anisotropic multiple of $\gamma \ot \Fh$ over $\Fh$ will have a nonzero second residue form, so $\phi_2$ cannot contain $\gamma$.
 
Otherwise, $\phi_2 \ot \Fh$ is hyperbolic, and there exists some 2-dimensional subform $q$ of $\phi_2$ that becomes hyperbolic over $\Fh$ by Lemma \ref{adrian}.  It follows 
that there is a $(d-1)$-Pfister $\psi$ that is contained in $\phi_2$ and that becomes hyperbolic over $\Fh$.  But $\phi_1 \ot \Fh$ is ramified, hence anisotropic, so $\phi_1$ cannot contain $\psi$. 
\end{proof}

\section{Theorem for quadratic forms} 

\begin{thm} \label{pf.thm}
Let $F$ be a field of characteristic $\ne 2$ such that $H^d_u(F) = 0$ for some $d \ge 2$.  Let $\phi_1$ and $\phi_2$ be anisotropic $d$-Pfister forms over $F$ such that, for every $(d-1)$-Pfister form $\gamma$, we have: $\gamma$ divides $\phi_1$ if and only if $\gamma$ divides $\phi_2$.  Then $\phi_1$ is isomorphic to $\phi_2$.
\end{thm}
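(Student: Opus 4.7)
The plan is to mirror the structure of the proof of Theorem \ref{MT}, with Proposition \ref{crux} playing the role that Proposition \ref{crux.deg2} plays there. The key input is the Arason--Pfister Hauptsatz together with the injectivity, established by Voevodsky, of the natural map $I^d(F)/I^{d+1}(F) \to H^d(F)$; these combine to show that two anisotropic $d$-Pfister forms are isomorphic if and only if they have the same class, which I denote $e_d(\phi_i)$, in $H^d(F)$. Arguing by contradiction, I assume $\phi_1 \not\cong \phi_2$, making $c := e_d(\phi_1) + e_d(\phi_2) \in H^d(F)$ nonzero, and I aim to show $c \in H^d_u(F)$, which is zero by hypothesis.

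Placing $c$ in $H^d_u(F)$ amounts to ruling out two obstructions. First, suppose $c$ ramifies at some nondyadic discrete valuation $v$. Then the residues of $e_d(\phi_1)$ and $e_d(\phi_2)$ at $v$ differ, so either exactly one of $\phi_1, \phi_2$ ramifies at $v$, or both ramify with distinct residue forms; in either case, Proposition \ref{crux} produces a $(d-1)$-Pfister dividing one of $\phi_1, \phi_2$ but not the other, contradicting the hypothesis.

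The harder obstruction, and the main step, is the real-closure case. Suppose $c$ restricts nontrivially to some real closure $R$ of $F$, with associated ordering $P$. Over $R$ every $d$-Pfister form is either hyperbolic or the unique anisotropic one $\pform{-1, \ldots, -1}$, so we may assume $\phi_1 \otimes R$ is anisotropic while $\phi_2 \otimes R$ is hyperbolic. Writing $\phi_2 = \pform{a_1, \ldots, a_d}$, the signature of $\phi_2$ at $P$ equals $\prod_{i=1}^{d} \bigl(1 + \mathrm{sgn}_P(-a_i)\bigr)$, which vanishes, so some $a_i$ is positive at $P$; after reordering, assume $a_d > 0$ at $P$, making $\pform{a_d} \otimes R$ hyperbolic. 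I then take
\[
\gamma := \pform{a_1, \ldots, a_{d-2}, a_d},
\]
a $(d-1)$-Pfister that divides $\phi_2 = \gamma \otimes \pform{a_{d-1}}$ and whose scalar extension $\gamma \otimes R$ is hyperbolic (it has $\pform{a_d}$ as a tensor factor). If $\gamma$ also divided $\phi_1$, this would force $\phi_1 \otimes R$ to be hyperbolic, contradicting its anisotropy; so $\gamma$ divides $\phi_2$ but not $\phi_1$, contradicting the divisibility hypothesis of the theorem. The delicate point---and the main obstacle in the whole proof---is ensuring that the Pfister presentation of $\phi_2$ contains a slot that becomes a square over $R$; that is exactly what the vanishing of $\mathrm{sgn}_P(\phi_2)$ guarantees.
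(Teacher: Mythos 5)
Your proof is correct and follows essentially the same route as the paper: assume $\phi_1\not\cong\phi_2$, note the class of $\phi_1+\phi_2$ is nonzero hence outside $H^d_u(F)$, treat nondyadic ramification via Proposition \ref{crux}, and treat a real closure by extracting a $(d-1)$-Pfister divisor that contains a slot positive at the ordering. Your choice of $\gamma$ (retaining the positive slot and dropping another) is in fact the right one; the paper's printed choice $\pform{a_{12},\ldots,a_{1d}}$ omits the positive slot $a_{11}$, an indexing slip that your version silently repairs.
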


We remark that for  $d = 2$, Theorem \ref{pf.thm} does not include the case where $F$ is a retract rational extension of a global field with more than one dyadic place, a case included in Theorem \ref{MT}.

\begin{proof}[Proof of Theorem \ref{pf.thm}]
Suppose that $\phi_1$ and $\phi_2$ are $d$-Pfister forms over $F$ for some $d \ge 2$, $H^d_u(F)$ is zero, and $\phi_1$ is not isomorphic to $\phi_2$.  Then $\phi_1 + \phi_2$ is in $H^d(F) \setminus H^d_u(F)$.  

If there is a nondyadic discrete valuation where $\phi_1 + \phi_2$ ramifies, then Proposition \ref{crux} gives a $(d-1)$-Pfister form that divides one of the $\phi_i$ and not the other. 
Otherwise,  there is an ordering $v$ of $F$ such that $\phi_1$ and $\phi_2$ are not isomorphic over the real closure $F_v$, i.e., one is locally hyperbolic and the other is not.  Write $\phi_i = \pform{a_{i1}, a_{i2}, \ldots, a_{id}}$ and suppose that $\phi_1$ is locally hyperbolic, so some $a_{1j}$ is positive; renumbering if necessary, we may assume that it is $a_{11}$.  Then the form $\pform{a_{12},  a_{13}, \ldots, a_{1d}}$ divides $\phi_1$, but it is hyperbolic over $F_v$ and so does not divide $\phi_2$.  
\end{proof}

\section{Appendix: tractable fields}

We now use the notion of unramified cohomology from \S\ref{ur.sec} to prove some new results and give new proofs of some known results regarding tractable fields.
Recall from \cite{CTW} that a field $F$ of characteristic $\ne 2$ is called \emph{tractable} if for every $a_1$, $a_2$, $a_3$, $b_1$, $b_2$, $b_3 \in \Fx$ such that
the six quaternion algebras $(a_i, b_j)$ are split for all $i \ne j$ and the three quaternion algebras $(a_i, b_i)$ are pairwise isomorphic, the algebras $(a_i, b_i)$ are necessarily split.  (This condition was motivated by studying conditions for decomposability of central simple algebras of degree 8 and exponent 2.)  
We prove:

\begin{prop} \label{tractable.prop}
If $H^2_u(F) = 0$, then $F$ is tractable.
\end{prop}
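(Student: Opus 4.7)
The plan is to prove that $F$ is tractable by showing that the common class $\xi := [(a_1, b_1)] = [(a_2, b_2)] = [(a_3, b_3)]$ in $H^2(F)$ actually lies in $H^2_u(F)$; then the hypothesis $H^2_u(F) = 0$ forces $\xi = 0$, so each $(a_i, b_i)$ is split.  Two conditions must be verified: that $\xi$ is killed over every real closure of $F$, and that $\xi$ is unramified at every nondyadic discrete valuation.

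\emph{Real closures.}  Over a real closure $R$, a quaternion algebra $(a, b)_R$ is nontrivial if and only if $a < 0$ and $b < 0$ in $R$.  If $\xi \ne 0$ in $H^2(R)$, then $(a_1, b_1)_R$ and $(a_2, b_2)_R$ are both nontrivial, forcing $a_1, a_2, b_1, b_2 < 0$ in $R$; but then $(a_1, b_2)_R \ne 0$, contradicting the hypothesis $(a_1, b_2) = 0$ in $H^2(F)$.

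\emph{Ramification.}  Fix a nondyadic discrete valuation $v$ of $F$ with uniformizer $\pi$ and residue field $\Fb$.  After multiplying by squares write $a_i = \pi^{x_i} u_i$ and $b_j = \pi^{y_j} w_j$ with $x_i, y_j \in \{0, 1\}$ and $u_i, w_j$ units, and set $\alpha_i := \bar u_i$, $\beta_j := \bar w_j \in H^1(\Fb, \Zm2)$ and $\epsilon := \overline{-1}$.  The tame-symbol formula gives
\[
S_{ij} := \partial_v\bigl((a_i, b_j)\bigr) = x_i y_j \epsilon + x_i \beta_j + y_j \alpha_i,
\]
so the hypothesis reads $S_{ij} = 0$ for $i \ne j$ and $S_{11} = S_{22} = S_{33} = \partial_v \xi$.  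By the symmetry $(a, \alpha) \leftrightarrow (b, \beta)$ it suffices to treat $(x_1, y_1) \in \{(0,0), (1,0), (1,1)\}$.  Case $(0, 0)$ is immediate.  In case $(1, 0)$, $\partial_v \xi = \beta_1$ and $S_{i1} = x_i \beta_1 = 0$ for $i = 2, 3$, so if either $x_i = 1$ we are done; if $x_2 = x_3 = 0$, the relations $y_i \alpha_i = \beta_1$ (from $S_{ii}$) and $y_j \alpha_i = 0$ for $\{i, j\} = \{2, 3\}$ still force $\beta_1 = 0$ on inspection of the values of $(y_2, y_3)$.  In case $(1, 1)$, the relations $S_{1j} = 0$ and $S_{i1} = 0$ solve $\beta_j = y_j(\alpha_1 + \epsilon)$ and $\alpha_i = x_i(\beta_1 + \epsilon)$ for $i, j \ne 1$; substituting into $S_{ii}$ for $i = 2, 3$ yields $x_i y_i(\alpha_1 + \beta_1 + \epsilon) = \partial_v \xi$, so any nonvanishing of $\partial_v \xi$ would demand $x_2 = x_3 = y_2 = y_3 = 1$, but then the vanishing of $S_{23}$ collapses to $\alpha_1 + \beta_1 + \epsilon = 0$, a contradiction.

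The main technical point is bookkeeping the sign contribution $\epsilon = \overline{-1}$ correctly in the $(1, 1)$ case; once the tame-symbol formula is in hand, the rest of the argument is elementary $\mathbb{F}_2$-linear algebra.
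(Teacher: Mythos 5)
Your proposal is correct and takes essentially the same approach as the paper: both arguments come down to checking that the common class of the $(a_i,b_i)$ is killed by every real closure and has trivial residue at every nondyadic discrete valuation, hence lies in $H^2_u(F)=0$. The only difference is presentational---you compute the residues via the tame symbol and finish with linear algebra over $\Zm2$ in the square-class group of the residue field, while the paper phrases the same symbol manipulations as a contradiction after passing to the completion.
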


\begin{proof}
For sake of contradiction, suppose we are given $a_i, b_i$ as in the definition of tractable where the quaternion algebras $(a_i, b_i)$ are not split.  
If the $(a_i, b_i)$ do not split at some real closure of $F$, then the $a_i$ and $b_i$ are all negative there.  Hence $(a_i, b_j)$ for $i \ne j$ is also not split, a contradiction.

So the $(a_i, b_i)$ ramify at some nondyadic discrete valuation $v$ of $F$ and in particular are division over a the completion of $F$ at $v$.  We replace $F$ with its completion and derive a contradiction.  Modifying each $a_i$ or $b_i$ by a square if necessary, we may assume that each one has value 0 or 1.  Further, as each $(a_i, b_i)$ is ramified, at most one of the two slots has value 0.

Suppose first that one of the $a_i$ or $b_i$---say, $a_1$---has value 0.  Then $b_1$ has value 1 and $a_1$ is not a square in $F$.
As $(a_1, b_j)$ is split for $j \ne 1$, we deduce that $v(b_j) = 0$.   Hence $v(a_2) = v(a_3) = 1$.  As $(a_3, b_3)$ is division,  $b_3$ is not a square in $F$.   Since $v(a_2) = 1$ and $v(b_3) = 0$, it follows that $(a_2, b_3)$ is division, a contradiction.

It remains to consider the case where the $a_i$'s and $b_i$'s all have value 1.  We can write $a_i = \pi \alpha_i$ and $b_i = \pi \beta_i$, where $\pi$ is a prime element---say, $a_1$ so $\alpha_1 = 1$---and $\alpha_i, \beta_i$ have value 0.  In the Brauer group of $F$, we have for $i \ne j$:
\[
0 = (\pi \alpha_i, \pi \beta_j) = (\pi \alpha_i, -\alpha_i \beta_j) = (\pi, -\alpha_i \beta_j) + (\alpha_i, \beta_j).
\]
It follows that $-\alpha_i \beta_j$ is a square in $F$ for $i \ne j$.  In particular, $-\beta_2$, $-\beta_3$, $\alpha_2$, and $\alpha_3$ are all squares.  Then
\[
(a_2, b_2) = (\pi \alpha_2, \pi \beta_2) = (\pi, -\pi) = 0,
\]
a contradiction.
\end{proof}

\begin{eg}
In \cite{CTW}, the authors proved that a local field is tractable if and only if it is nondyadic (ibid., Cor.~2.3) and a global field is tractable if and only if it has at most 1 dyadic place (ibid., Th.~2.10).  Combining the preceding proposition and Example \ref{hdu.eg} gives a proof of the ``if" direction for both of these statements.
\end{eg}

\begin{cor}[{\protect{\cite[Th.~3.17]{CTW}}}]
Every field of transcendence degree 1 over a real-closed is tractable.
\end{cor}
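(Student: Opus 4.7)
The plan is to combine two results already in hand: Corollary \ref{real.trdeg} and Proposition \ref{tractable.prop}. The whole argument is a one-line composition, so I will just make the bookkeeping explicit.

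First, I would apply Corollary \ref{real.trdeg} in the case $d = 1$: since $F$ has transcendence degree $1$ over a real-closed field, the corollary immediately gives $H^2_u(F) = 0$. (Unwinding this: $F(\sqrt{-1})$ has transcendence degree $1$ over an algebraically closed field, hence cohomological dimension $\le 1$, so Lemma \ref{cd} applies.)

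Next, I would invoke Proposition \ref{tractable.prop}, which asserts that any field with $H^2_u(F) = 0$ is tractable. Combining the two statements yields the corollary.

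There is no real obstacle here; the work was done in Corollary \ref{real.trdeg} and Proposition \ref{tractable.prop}. The only thing worth noting is that the hypothesis on characteristic (needed to even define $H^2_u$ and to talk about tractability) is automatic: a real-closed field has characteristic $0$, so $F$ does as well, in particular $\ch F \ne 2$.
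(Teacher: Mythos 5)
Your proof is correct and is exactly the paper's argument: apply Corollary \ref{real.trdeg} with $d=1$ to get $H^2_u(F)=0$, then invoke Proposition \ref{tractable.prop}. The added remark about characteristic $0$ is fine but not needed beyond what you say.
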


\begin{proof}
Combine the proposition and Corollary \ref{real.trdeg}.
\end{proof}

\begin{eg}
The converse of Proposition \ref{tractable.prop} is false.  For every odd prime $p$, $\Q_p((x))$ is tractable by \cite[Prop.~2.5]{CTW}.  On the other hand, the usual discrete valuation on the formal power series ring is essentially the unique one \cite[4.4.1]{EnglerPrestel}, so $H^2_u(\Q_p((x))) = H^2(\Q_p) = \Zm2$.
\end{eg}

\begin{cor}
Let $F_0$ be a field of characteristic $\ne 2$ that
\begin{itemize}
\item is tractable and 
\item is global, local, real-closed, or has no $2$-torsion in its Brauer group.
\end{itemize}
Then every unirational extension $F/F_0$ satisfying \eqref{ext.hyp}---e.g., every retract rational extension $F/F_0$---is also tractable.
\end{cor}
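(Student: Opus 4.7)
The plan is to reduce to showing that $H^2_u(F) = 0$, at which point Proposition \ref{tractable.prop} immediately gives tractability of $F$. The reduction passes through Proposition \ref{ext.prop}: since $F/F_0$ is unirational and satisfies \eqref{ext.hyp}, the restriction map induces an isomorphism $H^2_u(F_0) \iso H^2_u(F)$, so it suffices to verify $H^2_u(F_0) = 0$.

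The heart of the argument is therefore showing that, under each of the four alternatives in the second hypothesis, tractability of $F_0$ forces $H^2_u(F_0) = 0$. I would handle the cases in turn. If $F_0$ is real-closed, this is Example \ref{hdu.eg}(3), and if $F_0$ has no $2$-torsion in its Brauer group then $H^2_u(F_0) \subseteq H^2(F_0) = 0$ trivially. For the local and global cases, Example \ref{hdu.eg}(1) and (2) characterize vanishing of $H^2_u$ by, respectively, nondyadicity and having at most one dyadic place; the quoted results of Chacron--Tignol--Wadsworth (\cite[Cor.~2.3]{CTW} and \cite[Th.~2.10]{CTW}, recalled in the example following Proposition \ref{tractable.prop}) state that tractability is equivalent to exactly these same conditions. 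Thus in the local and global cases, the tractability hypothesis on $F_0$ yields $H^2_u(F_0) = 0$.

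Having established $H^2_u(F_0) = 0$ in all four cases, the isomorphism $H^2_u(F_0) \cong H^2_u(F)$ from Proposition \ref{ext.prop} gives $H^2_u(F) = 0$, and Proposition \ref{tractable.prop} finishes the proof. For the parenthetical ``e.g., every retract rational extension,'' I would invoke Example \ref{retract.u}, which records that retract rational extensions are unirational and satisfy \eqref{ext.hyp}.

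There is no real obstacle here; the proof is essentially a bookkeeping exercise gluing together Propositions \ref{ext.prop} and \ref{tractable.prop} with the characterizations of tractability for local and global fields. The only subtle point worth flagging explicitly is that tractability on its own is \emph{not} enough to conclude $H^2_u(F_0) = 0$ (the example $\Q_p((x))$ shows this), which is precisely why the second bulleted hypothesis is needed.
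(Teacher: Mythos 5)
Your proposal is correct and follows essentially the same route as the paper: establish $H^2_u(F_0)=0$ case by case (using Example \ref{hdu.eg} and the Chacron--Tignol--Wadsworth characterizations of tractable local and global fields), transfer to $H^2_u(F)=0$ via Proposition \ref{ext.prop}, and conclude with Proposition \ref{tractable.prop}. Your write-up is in fact slightly more explicit than the paper's brief proof about the real-closed and trivial-Brauer-group cases and about why the second hypothesis cannot be dropped.
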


\begin{proof}
We note that $F_0$ has $H^2_u(F_0) = 0$.  In case $F_0$ is global or local, then $F_0$ has one dyadic place or is non-dyadic respectively by \cite{CTW}, and the claim follows, see Example \ref{hdu.eg}.  Then $H^2_u(F)$ is also zero by Proposition \ref{ext.prop}, hence $F$ is tractable.
\end{proof}

Of course, this corollary includes the statement that rational extensions of tractable global fields are tractable, which was a nontrivial result (Example 3.14) in \cite{CTW}.  However, a stronger result has already been proved in \cite[Cor.~2.14]{Han:g0}: every rational extension of a tractable field is tractable.

We do get a large family of new examples of tractable fields:

\begin{cor}
Let $F$ be a global field with at most one dyadic place.  If $G$ is an isotropic, simply connected, absolutely almost simple linear algebraic group over $F$, then the function field $F(G)$ is tractable.
\end{cor}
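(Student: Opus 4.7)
The plan is to apply Proposition~\ref{tractable.prop}: it suffices to show $H^2_u(F(G)) = 0$. Since $F$ is global with at most one dyadic place, Example~\ref{hdu.eg}(2) gives $H^2_u(F) = 0$, so the task reduces to verifying the hypotheses of Proposition~\ref{ext.prop} for the extension $F(G)/F$, namely that it is unirational and that it satisfies the condition \eqref{ext.hyp}.

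Unirationality is the easy step: any connected linear algebraic group over $F$ admitting an $F$-rational point is unirational over $F$, and isotropy of $G$ guarantees such a point (indeed, a one-parameter unipotent subgroup coming from a proper $F$-parabolic produces many). The substantive point is condition \eqref{ext.hyp}, that every class of $H^2(F(G))$ unramified at every discrete valuation of $F(G)/F$ descends to $H^2(F)$. I would verify this by invoking the retract rationality of $F(G)$ over $F$ for $G$ isotropic, simply connected, absolutely almost simple---a property established in the work on $R$-equivalence and the Kneser--Tits problem (Platonov, Chernousov, Gille, \emph{et al.})---so that Example~\ref{retract.u} applies directly and yields \eqref{ext.hyp}. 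Alternatively, one can prove \eqref{ext.hyp} intrinsically by the standard recipe: take a smooth compactification $X$ of $G$, invoke purity to identify the unramified classes with $\operatorname{Br}(X)$, and show that the $F$-point of $G$ splits the natural map $\operatorname{Br}(F) \to \operatorname{Br}(X)$ while the simply connected, absolutely almost simple hypothesis on $G$ kills any additional geometric contribution.

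The main obstacle is precisely this retract-rationality (or direct Brauer) input: while full $F$-rationality of $G$ is still open in a few exceptional anisotropic-kernel situations, the weaker retract rationality holds uniformly and is what the general corollary on unirational extensions \eqref{ext.hyp} requires. Once \eqref{ext.hyp} is in hand, Proposition~\ref{ext.prop} delivers $H^2_u(F(G)) \cong H^2_u(F) = 0$, and Proposition~\ref{tractable.prop} completes the proof that $F(G)$ is tractable.
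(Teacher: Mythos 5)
Your proof is correct and follows essentially the same route as the paper: the paper's entire proof is the citation of Gille's Kneser--Tits results for retract rationality of $F(G)/F$, combined (via Example \ref{retract.u}, Proposition \ref{ext.prop}, Example \ref{hdu.eg}(2), and Proposition \ref{tractable.prop}) with exactly the chain you spell out. The only cosmetic difference is that you unwind that chain explicitly and sketch an optional compactification/Brauer-group alternative, neither of which changes the substance.
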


\begin{proof}
The extension $F(G)/F$ is retract rational by Theorems 5.9 and 8.1 in \cite{Gille:KT}.
\end{proof}

\noindent{\small{\textbf{Acknowledgments.}  The first author's research was partially supported by National Science Foundation grant no.\ DMS-0653502.  Both authors thank Adrian Wadsworth and Kelly McKinnie for their comments on an earlier version of this paper.}}

\providecommand{\bysame}{\leavevmode\hbox to3em{\hrulefill}\thinspace}
\providecommand{\MR}{\relax\ifhmode\unskip\space\fi MR }
\providecommand{\MRhref}[2]{%
  \href{http://www.ams.org/mathscinet-getitem?mr=#1}{#2}
}
\providecommand{\href}[2]{#2}


\begin{thebibliography}{KMRT98}

\bibitem[A]{Ar:prim}
J.Kr. Arason, \emph{Primideale im graduierten {W}ittring und im mod {2}
  {C}ohomologiering}, Math. Z. \textbf{145} (1975), 139--143.

\bibitem[B]{Bou:ac}
N.~Bourbaki, \emph{Commutative algebra. {C}hapters 1--7}, Elements of
  Mathematics, Springer-Verlag, Berlin, 1998, Translated from the French,
  Reprint of the 1989 English translation.

\bibitem[CTW]{CTW}
M.~Chacron, J.-P. Tignol, and A.R. Wadsworth, \emph{Tractable fields}, Canad.
  J. Math. \textbf{51} (1999), no.~1, 10--25.

\bibitem[D]{Draxl}
P.K. Draxl, \emph{Skew fields}, London Math.\ Soc.\ Lecture Note Series,
  vol.~81, Cambridge University Press, Cambridge-New York, 1983.

\bibitem[EP]{EnglerPrestel}
A.J. Engler and A.~Prestel, \emph{Valued fields}, Springer, 2005.

\bibitem[Gi]{Gille:KT}
P.~Gille, \emph{Le probl\`eme de {K}neser-{T}its}, S\'eminaire Bourbaki,
  60\`eme ann\'ee, no. 983, 2006--2007.

\bibitem[GMS]{GMS}
S.~Garibaldi, A.~Merkurjev, and J-P. Serre, \emph{Cohomological invariants in
  {G}alois cohomology}, University Lecture Series, vol.~28, Amer.\ Math.\ Soc.,
  2003.

\bibitem[GP]{GPe:comp}
S.~Garibaldi and H.P. Petersson, paper in preparation.

\bibitem[Han]{Han:g0}
I.~Han, \emph{Tractability of algebraic function fields in one variable of
  genus zero over global fields}, J. Algebra \textbf{244} (2001), no.~1,
  217--235.

\bibitem[JW]{JW}
B.~Jacob and A.~Wadsworth, \emph{Division algebras over {H}enselian fields}, J.
  Algebra \textbf{128} (1990), 126--179.

\bibitem[KMRT]{KMRT}
M.-A. Knus, A.S. Merkurjev, M.~Rost, and J.-P. Tignol, \emph{The book of
  involutions}, Colloquium Publications, vol.~44, Amer.\ Math.\ Soc., 1998.

\bibitem[Lam\,80]{Lam:surv}
T.Y. Lam, \emph{The theory of ordered fields}, Ring theory and algebra, III
  (Proc. Third Conf., Univ. Oklahoma, Norman, Okla., 1979), Lecture Notes in
  Pure and Appl. Math., vol.~55, Dekker, New York, 1980, pp.~1--152.

\bibitem[Lam\,05]{Lam}
\bysame, \emph{Introduction to quadratic forms over fields}, Graduate Studies
  in Mathematics, vol.~67, Amer. Math. Soc., Providence, RI, 2005.

\bibitem[M]{M:urel}
A.~Merkurjev, \emph{Unramified elements in cycle modules}, J. London Math. Soc.
  (2) \textbf{78} (2008), 51--64.

\bibitem[NSW]{NSW2}
J.~Neukirch, A.~Schmidt, and K.~Wingberg, \emph{Cohomology of number fields},
  2nd ed., Grundlehren der math. Wissenschaften, vol. 323, Springer, 2008.

\bibitem[PR]{PrRap:weakly}
G.~Prasad and A.S. Rapinchuk, \emph{Weakly commensurable arithmetic groups and
  isospectral locally symmetric spaces}, Pub. Math. IHES (2009), to appear.

\bibitem[R]{Reiner}
I.~Reiner, \emph{Maximal orders}, LMS Monographs, vol.~5, Academic Press, 1975.

\bibitem[Sa]{Salt:lect}
D.J. Saltman, \emph{Lectures on division algebras}, CBMS Regional Conference
  Series in Mathematics, vol.~94, American Mathematical Society, Providence,
  RI, 1999.

\bibitem[Se]{SeCG}
J-P. Serre, \emph{Galois cohomology}, Springer-Verlag, 2002, originally
  published as \emph{Cohomologie galoisienne} (1965).

\bibitem[W]{W:survey}
A.R. Wadsworth, \emph{Valuation theory on finite dimensional division
  algebras}, Valuation theory and its applications, {V}ol. {I} ({S}askatoon,
  {SK}, 1999), Fields Inst. Commun., vol.~32, Amer. Math. Soc., Providence, RI,
  2002, pp.~385--449.

\end{thebibliography}
\end{document}